\newtheorem{thm}{Theorem}[section]
\newtheorem{cor}[thm]{Corollary}
\newtheorem{lem}[thm]{Lemma}
\newtheorem{defn}[thm]{Definition}
\newtheorem{rem}[thm]{\bf{Remark}}
\numberwithin{equation}{section}
\date{}
\title{\bf General Form of the Automorphism Group of Bicyclic Graphs }
\author{\bf Somayeh Madani and Ali Reza Ashrafi$^{\star}$ }
\address{Department of Pure Mathematics, Faculty of Mathematical Science,
University of Kashan, Kashan 87317-53153, I. R. Iran}
\thanks{$^{\star}$Corresponding author (Email: ashrafi@kashanu.ac.ir)}
\begin{document}

\maketitle

\begin{abstract}
In 1869, Jordan proved that the set $\mathcal{T}$ of all finite group that can be represented as the automorphism group of a tree is containing the trivial group and it is closed under taken direct product of groups of lower order in $\mathcal{T}$ and wreath product of a member in $\mathcal{T}$ and the symmetric group on $n$ symbols. The aim of this paper is to continue this work and another works by Klav$\acute{\rm i}$k and Zeman in 2017 to present a class $\mathcal{S}$ of finite groups for which the automorphism group of each bicyclic graph is a member of $\mathcal{S}$ and this class is minimal with this property.

\vskip 3mm

\noindent\textbf{Keywords:} Automorphism group, tree, unicyclic graph, bicyclic graph.

\vskip 3mm

\noindent\textit{2010 AMS Subject Classification Number:} 20B25.
\end{abstract}

\section{Basic Definitions}

The aim of this section is to provide some introductory materials that will
be kept throughout. All graphs are assumed to be undirected, simple and finite. The set of all vertices and edges of a graph $G$ are denoted by $V(G)$ and $E(G)$, respectively. A rooted graph is a graph in which one vertex is distinguished as the root. If the graph $\Gamma$ is containing $n$ vertices and $m$ edges, then the cyclomatic number of $G$ is defined as $c = m - n + 1$. If $c = 0, 1, 2$ then $G$ is called a tree, a unicyclic and a bicyclic graph, respectively.

Suppose $G$ and $H$ are groups and $K$ acts on a set $X$. Define: $$\{ (h;f) \mid f:X \longrightarrow G \ \& \ h \in H\} \ \ ; \ \ (h_1;f_1)(h_2;f_2) = (h_1h_2;f_1^{h_2}f_2),$$ where $f_1^{h_2}(x) = f_1(x^{h_2})$. This product defines a group structure and the resulting group is called the wreath product of $G$ with $H$, denoted by $G \wr H$. The wreath product is an important tool to describe the automorphism group of graphs. Let the connected components of a graph $G$ consist of $n_1$ copies of $G_1$, $n_2$ copies of $G_2$, $\ldots$, $n_r$ copies of $G_r$, where $G_1$, $\ldots$, $G_r$ are pairwise non-isomorphic. Then by a well-known result of Jordan $Aut(G) \cong (Aut(G_1) \wr S_{n_1}) \times \cdots \times (Aut(G_r) \wr S_{n_r}).$

Suppose $G_1$, $G_2$  and $G_3$ are  graphs with disjoint vertex sets and $v_1 \in V(G_1)$, $w_1$ $\in$ $V(G_2)$, $v_2 \in V(G_1 \cup G_2) \setminus \{ v_1, w_1 \}$ and $w_2 \in V(G_3)$. The union $G_1 \cup G_2$ is a graph with vertex set $V(G_1) \cup V(G_2)$ and edge set $E(G_1) \cup E(G_2)$. The graph union of more than two graphs can be defined inductively.  Following Do$\check{\rm s}$li\'c \cite{1x}, the splice $S(G_1,G_2;v_1,w_1)$ is defined by identifying the vertices $v_1$ and $w_1$ in $G_1 \cup G_2$. In a similar way, $S(G_1,G_2,G_3;v_1,w_1;v_2,w_2)$ $=$ $S(S(G_1,G_2;v_1,w_1),G_3;v_2,w_2)$ and we can define the splice of more than two graphs with respect to a parent graph by an inductive method. The link $L(G_1,G_2;v,w)$ is defined by adding an edge to the union graph $G_1 \cup G_2$ connecting the vertices $v$ and $w$. The link of more than two graphs can be defined similar to the splice.

Suppose $G$ is a simple and undirected graph and $u, v \in V(G)$. The distance between $u$ and $v$ is defined as the  length of a shortest path connecting these vertices. The eccentricity $\varepsilon(v)$  is defined to be the greatest distance between $v$ and any other vertices of $G$. The center of $G$ is the set of all vertices with minimum eccentricity, i.e the set of all vertices $u$ such that the greatest distance $d(u,v)$ to other vertices $v$ is minimal.

All calculations of this paper are done with the aid of GAP \cite{8} and Mathematica \cite{7}. We refer to  \cite{2,6} for basic definitions and notations not presented here.

\section{Backgrounds}

Suppose $\mathcal{C}$ is a class of graphs and $Aut(\mathcal{C})$ denotes the set of all groups that can be presented as the automorphism group of a member in $\mathcal{C}$. If $C$ is the class of all trees then $\mathcal{C}$ is denoted by
$\mathcal{T}$. By a result of Jordan \cite{3},   $\mathcal{T}$ is the class of all finite groups that can be defined inductively as follows:
\begin{enumerate}
\item $\{ 1\} \in \mathcal{T}$;
\item if $G_1, G_2 \in \mathcal{T}$, then $G_1 \times G_2 \in \mathcal{T}$;
\item if $G \in \mathcal{T}$ and $n \geq 2$, then $G \wr S_n \in \mathcal{T}$.
\end{enumerate}

One of the most interesting results after Jordan is a result of Babai \cite{1}. To state this result, we assume that $X$ and $Y$ are graphs and $f: V(X) \longrightarrow V(Y)$ is a mapping between vertex sets of $X$ and $Y$. The function $f$ is called a contraction if ($i$) $y_1y_2 \in E(Y)$ if and only if $y_1 \ne y_2$ and there is an edge $x_1x_2 \in E(X)$ such that $f(x_1) = y_1$ and $f(x_2) = y_2$; ($ii$) for any $y \in V(Y)$, the induced subgraph of $X$ on $f^{-1}(y)$ is connected. In the mentioned paper, Babai proved that if $\mathcal{C}$ is a class of finite graphs with this property that  $\mathcal{C}$ is closed under contraction and forming subgraphs, and if every finite  group occurs as the
automorphism group of a graph in  $\mathcal{C}$, then  $\mathcal{C}$ contains all finite graphs up to isomorphism. In another paper \cite{15}, he proved that  if $G$ is planar, then the group $Aut(G)$ has a sub normal chain $Aut(G) \triangleleft Y_1 \triangleleft Y_2 \triangleleft \cdots  \triangleleft Y_m = 1$.

Set $\mathcal{I} = \{ Aut(U) \mid U \ is \ an \ interval \ graph \}$. Klav$\acute{\rm i}$k and Zeman \cite{5}  proved that  $\mathcal{T}$ $=$  $\mathcal{I}$. They also obtained some interesting relation between the set of automorphism groups of some known classes of graphs. We encourage the interested readers to consult \cite{55} for more information on this problem.

The aim of this paper is to continue the interesting works of Klav$\acute{\rm i}$k and Zeman  by computing the automorphism group of bicyclic graphs.  In an exact phrase, if $\mathcal{S}$ denotes the set of all groups in the form of $Aut(G)$ with bicyclic graph $G$ then the set $\mathcal{S}$ will be determined in general.

\section{Main Results}
The aim of this section is to compute the automorphism group of an arbitrary bicyclic graph. To do this, we define:
\begin{eqnarray*}
\mathcal{B}_1 &=& \{ C \times (D \wr (Z_{2} \times Z_2)) \mid C, D \in \mathcal{T}\},\\
\mathcal{B}_2 &=& \{ C \times [(D \times D \times D \times D \times H \times H \times K \times K) \rtimes  (Z_{2} \times Z_2)] \mid C, D, H, K \in \mathcal{T}\},
\end{eqnarray*}
and $\mathcal{S} = \mathcal{T} \cup \mathcal{B}_1 \cup \mathcal{B}_2$. In this section, it will be proved that $\mathcal{S}$ is the set of all groups in the form of $Aut(\Delta)$, when $\Delta$ is a bicyclic graph.

Suppose $T$ is a tree, $G$ is a group, $X$ is a set and $u \in V(T)$. A branch at $u$ in $T$ is a maximal subtree containing $u$ as an endpoint, see \cite[p. 35]{35}. If the group $G$ acts on  $X$ and $x \in X$ then $G_x$ denotes the stabilizer subgroup of $G$ at the point $x$. An asymmetric graph is one with trivial automorphism group.

\begin{figure}
\centering
\includegraphics[width=10cm]{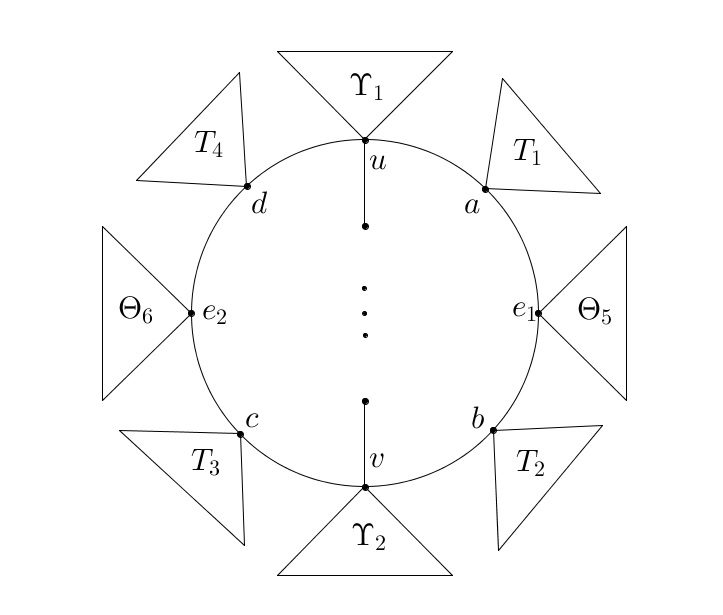}
\caption{The bicyclic graph of Lemma \ref{lem2}.} \label{1}
\end{figure}

The following simple lemma will be useful in our calculations.

\begin{lem}
Suppose $T$ is a tree, $G = Aut(T)$ and $v \in V(T)$. Then $G_v \in \mathcal{T}$.
\end{lem}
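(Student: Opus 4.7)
The plan is to proceed by induction on the number of vertices $n = |V(T)|$. When $n=1$, $G = Aut(T)$ is trivial and so is $G_v$, which lies in $\mathcal{T}$ by clause (1) of Jordan's characterization. For $n \geq 2$, let $u_1, \ldots, u_k$ be the neighbors of $v$ in $T$ and let $T_1, \ldots, T_k$ denote the connected components of $T - v$, with $u_i \in V(T_i)$. Since $v \notin V(T_i)$, each $T_i$ has strictly fewer vertices than $T$, so the inductive hypothesis applies to $(T_i, u_i)$ and yields $Aut(T_i)_{u_i} \in \mathcal{T}$.

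The heart of the argument is the structural decomposition
$$G_v \cong \prod_{j=1}^{s} \bigl( Aut(B_j)_{r_j} \wr S_{m_j} \bigr),$$
where $(B_1, r_1), \ldots, (B_s, r_s)$ are representatives of the rooted-isomorphism classes appearing among the rooted trees $(T_1, u_1), \ldots, (T_k, u_k)$, and $m_j$ is the number of indices $i$ with $(T_i, u_i) \cong (B_j, r_j)$ as rooted trees. This is a standard Jordan-type decomposition: any $\sigma \in G_v$ permutes the neighbors of $v$ and can only exchange two branches when they are rooted-isomorphic; conversely, any compatible datum (a permutation of the isomorphic branches together with a rooted automorphism on each branch) extends uniquely to an element of $G_v$.

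Granted this decomposition, each $Aut(B_j)_{r_j}$ lies in $\mathcal{T}$ by the inductive hypothesis; clause (3) promotes $Aut(B_j)_{r_j} \wr S_{m_j}$ into $\mathcal{T}$ when $m_j \geq 2$, and when $m_j = 1$ the wreath product collapses to $Aut(B_j)_{r_j}$ and is already in $\mathcal{T}$; finally clause (2) closes the direct product inside $\mathcal{T}$. The main obstacle is really only the wreath-product decomposition of $G_v$: verifying that gluing isomorphic branches together via a wreath factor correctly captures $G_v$ requires the fact that $v$ is the unique common vertex of all branches, so any automorphism fixing $v$ is completely determined by its permutation of branches plus its action on each branch. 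Once this identification is in place, the closure properties defining $\mathcal{T}$ finish the proof mechanically, with only a trivial side-check that each $T_i$ is strictly smaller than $T$.
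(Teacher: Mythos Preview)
Your proof is correct, but it takes a different route from the paper. You argue inductively via the algebraic characterization of $\mathcal{T}$: decompose $G_v$ as $\prod_j \bigl(Aut(B_j)_{r_j}\wr S_{m_j}\bigr)$ over the rooted-isomorphism classes of branches at $v$, apply the inductive hypothesis to each branch stabilizer, and close under clauses (2) and (3). The paper instead uses the graph-theoretic definition of $\mathcal{T}$ directly: it picks an asymmetric tree $\Lambda$ with a pendant vertex $w$ whose neighbor has a degree not occurring in $T$, splices it onto $T$ at $v$ to form $T' = S(T,\Lambda;v,w)$, and observes that $Aut(T') \cong G_v$ because the attached asymmetric ``tag'' forces every automorphism of $T'$ to fix $v$ while contributing no symmetry of its own. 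Your argument is the classical Jordan decomposition and has the advantage of exhibiting the structure of $G_v$ explicitly; the paper's argument is shorter and yields a concrete tree realizing $G_v$ without any induction or appeal to the inductive closure properties.
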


\begin{proof}
Choose an asymmetric tree $\Lambda$ containing a pendent vertex $w$ such that the degree of the  unique vertex $u$ adjacent to $w$ is different from all vertices of $T$.  Define  $T^\prime = S(T,\Lambda;v,w)$. Now it is easy to see that $Aut(T^\prime) \cong G_v$ and so $G_v \in \mathcal{T}$.
\end{proof}

Suppose $\Delta$ is an arbitrary bicyclic graph. Then the graph $\Delta$ has one of the following forms:
\begin{enumerate}
\item There are two cycles in $\Delta$ with at least one common edge.

\item There are two cycles in $\Delta$ without common edges and common vertices;

\item There are two cycles in $\Delta$ with a common vertex and without common edges;
\end{enumerate}
A bicyclic graph $H$ is said to be of type $i$  ($i=1, 2, 3$) if $H$  satisfies  the condition $i$.

Suppose $\Delta$ is a graph and $T_1$, $T_2$ are two subgraphs of $\Delta$ such that $T_1, T_2$ are trees and $v_1 \in V(T_1)$, $v_2 \in V(T_2)$ are vertices of a cycle in $\Delta$.  We say these trees satisfy the condition $(\star)$ if and only if ($T_1$, $T_2$) and ($Aut(T_1)_{v_1}$, $Aut(T_2)_{v_2}$) are pairs of isomorphic graphs.

\begin{lem} \label{lem2} Suppose $\Delta$  is a bicyclic graph depicted in Figure \ref{1} and all pairs of  elements in each set $\{ T_1, T_2, T_3, T_4\}$, $\{ \Theta_5, \Theta_6\}$ and $\{ \Upsilon_1, \Upsilon_2 \}$ satisfy the condition $(\star)$. We also assume that $D$ $=$ $(Aut(T_1))_a$ $\cong$ $(Aut(T_2))_b$ $\cong$ $(Aut(T_3))_c$ $\cong$ $(Aut(T_4))_d$, $K$ $=$ $(Aut(\Upsilon_1))_u$ $\cong$ $(Aut(\Upsilon_2))_v$ and $H=(Aut(\Theta_5))_{e_1}$ $\cong$ $(Aut(\Theta_6))_{e_2}$. Then,
$Aut(\Delta)$ $\cong$ $(D \times D \times D \times D \times H \times H \times K \times K) \rtimes_{\phi} (\Bbb{Z}_2 \times \Bbb{Z}_2)$.
\end{lem}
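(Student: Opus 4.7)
The plan is to realize $Aut(\Delta)$ as a semidirect product by separating the symmetries of the bicyclic skeleton (the subgraph obtained by deleting every pendant branch $T_i$, $\Theta_j$, $\Upsilon_k$) from the automorphisms that act only on those branches. Concretely, I would study the restriction homomorphism $\rho \colon Aut(\Delta) \to Q$, where $Q$ is the group of skeletal automorphisms that respect the isomorphism classes of the attached pendant trees, identify the image and the kernel of $\rho$, and then produce a splitting.

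First, every $\sigma \in Aut(\Delta)$ preserves the set of vertices lying on some cycle of $\Delta$ and thus restricts to a skeletal automorphism; condition $(\star)$, together with the placement of the pendant trees shown in Figure \ref{1}, forces the image of $\rho$ to lie in $Q$. Direct inspection of Figure \ref{1} shows that $Q \cong \mathbb{Z}_2 \times \mathbb{Z}_2$, generated by two independent reflections: one interchanges $T_1 \leftrightarrow T_2$, $T_3 \leftrightarrow T_4$, and $\Theta_5 \leftrightarrow \Theta_6$, while the other interchanges $T_1 \leftrightarrow T_3$, $T_2 \leftrightarrow T_4$, and $\Upsilon_1 \leftrightarrow \Upsilon_2$. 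Second, an element of $N = \ker\rho$ fixes every skeletal vertex, so by the previous lemma it is determined by an independent choice of stabilizer element on each pendant branch. This yields
\[
N \cong (Aut(T_1))_a \times (Aut(T_2))_b \times (Aut(T_3))_c \times (Aut(T_4))_d \times (Aut(\Theta_5))_{e_1} \times (Aut(\Theta_6))_{e_2} \times (Aut(\Upsilon_1))_u \times (Aut(\Upsilon_2))_v,
\]
which by hypothesis equals $D \times D \times D \times D \times H \times H \times K \times K$.

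Finally, I would lift each generator of $Q$ to an explicit automorphism of $\Delta$ by combining the skeletal reflection with a chosen tree-isomorphism on each swapped pair (provided by $(\star)$) that sends attachment vertex to attachment vertex. These lifts generate a complement to $N$ in $Aut(\Delta)$, giving $Aut(\Delta) \cong N \rtimes_\phi (\mathbb{Z}_2 \times \mathbb{Z}_2)$, where $\phi$ is the induced permutation of the $N$-factors in the same way that $Q$ permutes the attachment points. The main obstacle I anticipate is checking that no skeletal symmetry outside $\mathbb{Z}_2 \times \mathbb{Z}_2$ can occur: this requires a careful case analysis on the bicyclic core in Figure \ref{1}, using the distinctness of the three pendant-tree families $\{T_i\}$, $\{\Theta_j\}$, $\{\Upsilon_k\}$ to rule out extra rotational or diagonal symmetries. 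Once this is secured, the remaining steps are standard semidirect-product bookkeeping.
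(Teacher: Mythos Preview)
Your approach is essentially the same as the paper's: both identify the normal subgroup $N$ of automorphisms fixing the skeleton pointwise (the direct product of the rooted-tree stabilizers) and a Klein four-group complement coming from the two independent reflections of the bicyclic core, then assemble the semidirect product. The paper carries this out by writing down explicit involutions $f_1,f_2$ on $V(\Delta)$, checking they commute, defining the pointwise-stabilizer subgroups $V_i,U_j,G_k$ directly, and then verifying by case analysis that every $\varrho\in Aut(\Delta)$ factors as a product from $V_1\cdots V_4U_5U_6G_1G_2$ times an element of $\langle f_1,f_2\rangle$; your restriction-homomorphism framing is a cleaner packaging of exactly the same argument. One small slip: from the path description $P_1=\{v,a,e_1,b,u\}$, $P_2=\{u,d,e_2,c,v\}$ in the paper, the reflection swapping $T_1\leftrightarrow T_2$, $T_3\leftrightarrow T_4$ exchanges $\Upsilon_1\leftrightarrow\Upsilon_2$ (not the $\Theta$'s), and the other reflection handles the $\Theta$'s, so your pairing of the $\Theta/\Upsilon$ swaps with the $T$-swaps is reversed---but this is only a figure-reading detail and does not affect the argument.
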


\begin{proof}
Define  $\phi:\Bbb{Z}_2 \times \Bbb{Z}_2 \to Aut(D \times D \times D \times D \times H \times H \times K \times K)$ by $\phi(0,0) =I$ and $\phi(0,1), \phi(1,1)$, $\phi(1,1)$ are defined as follows:
\begin{align*}
\phi(0,1)&=\psi_1:(\alpha,\beta,\gamma,\delta,\lambda,\lambda^{\prime},\sigma,\sigma^{\prime})\mapsto (\gamma,\delta,\alpha,\beta,\lambda,\lambda^{\prime},\sigma^{\prime},\sigma)\\
\phi(1,0)&=\psi_2:(\alpha,\beta,\gamma,\delta,\lambda,\lambda^{\prime},\sigma,\sigma^{\prime})\mapsto (\beta, \alpha, \delta,\gamma, \lambda^{\prime},\lambda,\sigma,\sigma^{\prime})\\
\phi(1,1)&=\psi_3:(\alpha,\beta,\gamma,\delta,\lambda,\lambda^{\prime},\sigma,\sigma^{\prime})\mapsto (\beta, \alpha, \delta,\gamma, \lambda^{\prime},\lambda,\sigma^{\prime},\sigma)
\end{align*}
where $\alpha \in Aut(T_1)$, $\beta \in Aut(T_2)$, $\gamma \in Aut(T_3)$, $\delta \in Aut(T_4)$, $\lambda \in Aut(\Upsilon_1)$, $\lambda^{\prime}\in Aut(\Upsilon_2)$, $\sigma \in Aut(\Theta_5)$ and $\sigma^{\prime} \in Aut(\Theta_6)$. Moreover, we assume that $V(T_i)$ $=$ $\{t_{1}^{i},t_2^i,\cdots, t_m^i\}$,
$V(\Theta_j)$ $=$ $\{s_{1}^{j},s_2^j,\cdots, s_p^j\}$ and $V(\Upsilon_k)$ $=$ $\{r_{1}^{k},r_2^k,\cdots, r_q^k\}$, where $1 \leq i \leq 4$, $j=5,6$ and $k=1,2$.

There are three paths connecting vertices $u$ and $v$. These paths have the vertex sets $V(P_1) = \{v,a,e_1,b,u\}$, $V(P_2) = \{ u,d,e_2,c,v\}$ and
$$V(P_3) = \left\{ \begin{array}{ll} \{ v, u_{11}, u_{12}, \ldots, u_{1t},u_{21}, u_{22}, \ldots, u_{2t}, u\} & 2 \nmid l(P_3)\\
\{ v, u_{11}, u_{12}, \ldots, u_{1t}, z, u_{21}, u_{22}, \ldots, u_{2t}, u\} & 2 \mid l(P_3)
\end{array}\right..$$

\begin{figure}
\centering
\includegraphics[width=12cm]{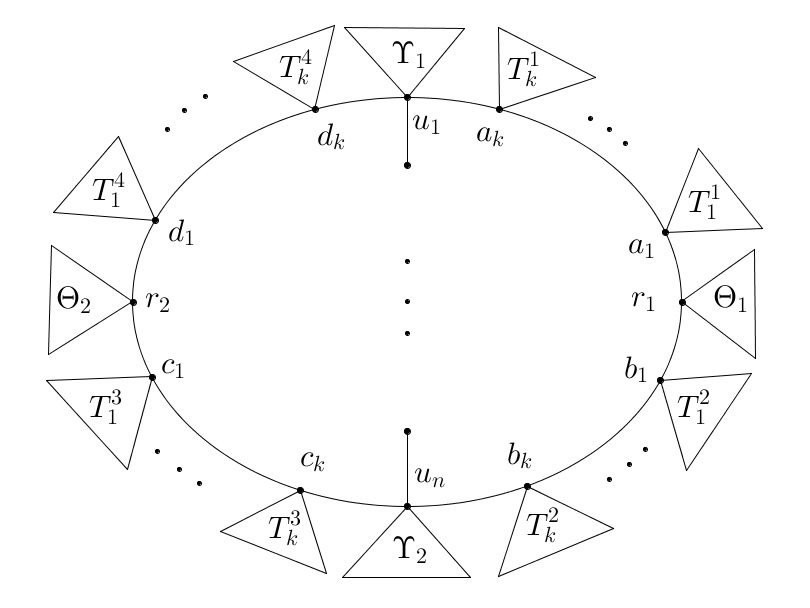}
\caption{A general bicyclic graph of the first type.} \label{2}
\end{figure}

Suppose  $O_1$ $=$ $V(\Delta) \setminus \left(V(T_1) \cup V(T_2) \cup V(T_3) \cup V(T_4) \cup V(\Theta_5) \cup V(\Theta_6)\right)$, $O_2$ $=$ $V(P_3) \setminus \{ u, v\}$,   $\sigma = (2 3)(1 4)(5 6)$ and $\tau = (1 2)(3 4)$. For $i = 1, 2, 3, 4$, $j = 5,6$ and $k = 1, 2$, we define two permutations $f_1$ and $f_2$ on $V(\Delta)$ and eight sets $V_i$, $U_j$ and $V_k$  as follows:

\begin{align*}
f_1 &= \left( \begin{array}{lll}  s_l^j & t_l^k & x\\ s_l^{\sigma(j)} & t_l^{\sigma(k)} & x \end{array}\right); \ \  x \in O_1, \\ f_2 &= \left( \begin{array}{llll} u_{lt}& r_l^j & t_l^k & s_l^j\\u_{\tau(l)t}& r_l^{\tau(j)} & t_l^{\tau(k)} & s_l^j \end{array}\right),\\
V_i&=\{f \in Aut(\Delta) \mid \forall x \in V({T_i}), f(x)=x \ \& \ f(t_1^i)=t_1^i\}; \ \ i = 1, 2, 3, 4, \\
U_j &=\{f \in Aut(\Delta) \mid \forall x \in V({\Theta_j}), f(x)=x \ \& \ f(s_1^j)=s_1^j\}; \ \ j = 5, 6, \\
G_k&=\{f \in Aut(\Delta) \mid \forall x \in V({\Upsilon_k}), f(x)=x \ \& \ f(r_1^k)=r_1^k\}; \ \ k = 1, 2.
\end{align*}

It is easy to see that $f_1, f_2$ are involutions in $Aut(\Delta)$. Define $L= \{ I,f_1\}$ and $M = \{ I,f_2\}$. Note that,

   \begin{align*}
   f_{1} o f_{2} (x)=
   \begin{cases}
   f_{1}(x) & \: x=s_l^j \in S_j; \: \: \: j=5,6 \\
   f_{2}(x) & \: x=r_l^t \in R_t; \: \: \: t=1,2 \\
   f_{1} f_{2} (t_l^1)=t_l^ {\sigma\tau(1)}=t_l^3  &\: x=t_l^1 \in T_1 \\
   f_{1} f_{2} (t_l^2)=t_l^ {\sigma\tau(2)}=t_l^4  &\: x=t_l^2 \in T_2 \\
   f_{1} f_{2} (t_l^3)=t_l^ {\sigma\tau(3)}=t_l^1  &\: x=t_l^3 \in T_3 \\
   f_{1} f_{2} (t_l^4)=t_l^ {\sigma\tau(4)}=t_l^2  &\: x=t_l^4 \in T_4
     \end{cases},
   \end{align*}
     \begin{align*}
  f_{2} o f_{1}(x)=
\begin{cases}
  f_{1}(x) & \: x=s_l^j \in S_j; \: \: \: j=5,6 \\
  f_{2}(x) & \: x=r_l^t \in R_t; \: \: \: t=1,2 \\
  f_{2} f_{1}  (t_l^1)=t_l^ {\tau\sigma(1)}=t_l^3  &\: x=t_l^1 \in T_1 \\
  f_{2} f_{1}  (t_l^2)=t_l^ {\tau\sigma(2)}=t_l^4  &\: x=t_l^2 \in T_2 \\
  f_{2} f_{1}  (t_l^3)=t_l^ {\tau\sigma(3)}=t_l^1  &\: x=t_l^3 \in T_3 \\
  f_{2} f_{1}  (t_l^4)=t_l^ {\tau\sigma(4)}=t_l^2  &\: x=t_l^4 \in T_4
  \end{cases},
  \end{align*}
 and so $f_1of_2 = f_2of_1$. This  proves that $ML = LM \cong L \times M$. We are now ready to prove that $V_1 V_2 V_3 V_4 U_5 U_6 G_1 G_2 \leq Aut(\Delta)$. If  $f \in V_i$ and  $g \in G_k$,  $1 \leq i \leq 4$ and $k = 1,2$,  then
$$fog(x)=
\begin{cases}
f(x) & x\in T_i \\
g(x) & x \in \Upsilon_j\\
x & otherwise
\end{cases}
= gof(x).$$

Thus  elements of $V_i$ and $G_k$  are commute to each other and so $V_iG_k$ is a subgroup of $Aut(\Delta)$. A similar argument shows that each element of $A$ commutes with each elements of $B$ such that  $A, B$ $\in$  $\Gamma_1 = \{ V_1, V_2, V_3, V_4, U_5, U_6, G_1, G_2 \}$. This proves that   $\Gamma_2 = V_1 V_2 V_3 V_4 U_5 U_6 G_1 G_2$ is a subgroup of $Aut(\Delta)$ and since each element of $\Gamma_1$ is a normal subgroup of $\Gamma_2$,
$V_1 V_2 V_3 V_4 U_5 U_6 G_1 G_2 \simeq V_1 \times V_2 \times V_3 \times V_4 \times U_5 \times U_6 \times G_1 \times G_2$. On the other hand, $V_i \simeq (Aut(T_i))_{t_1^i}$, $U_j  \simeq (Aut(\Theta_j))_{s_1^j}$ and $G_k   \simeq (Aut(\Upsilon_k))_{r_1^k}$.

We claim that if $\xi \in L$,  $\zeta \in M$, and  $\varrho \in Aut(\Delta)$, then $\zeta^{-1} \xi^{-1}\varrho(x) =\varrho \zeta^{-1} \xi^{-1}(x) $. To prove, we know that $\varrho(\Theta_j)\in \{\Theta_j, \Theta_{\sigma(j)}\}$,
$\varrho(\Upsilon_k)\in\{\Upsilon_k,\Upsilon_{\tau(k)}\}$ and
$\varrho(T_i)\in\{T_i, T_{\sigma(i)}, T_{\tau(i)}, T_{\sigma\tau(i)}\}$. If $\xi$ $=$ $\zeta = I$ then our claim is clear. We have three other cases as follows:
\begin{enumerate}
\item[a] $\xi=f_1 \in L$ and $\zeta =f_2 \in M$. Then   $\varrho(\Theta_j)=\Theta_{\sigma(j)}$,    $\varrho(\Upsilon_k)=\Upsilon_{\tau(k)}$  and  $\varrho(T_i)=T_{\sigma\tau(i)}$.  It is enough to show that $f_2^{-1}f_1^{-1}\varrho(x)=\varrho(f_2^{-1}f_1^{-1}(x))$. If $x=s_l^j\in\Theta_j$ and $\varrho(s_l^j)=s_{l^\prime}^{j^\prime}$, then
 \begin{align*}
 f_2^{-1}(f_1^{-1}\varrho(s_l^j))&= f_2^{-1}(s_{l^\prime}^{\sigma(j^\prime)})=s_{l^\prime}^{\sigma(j^\prime)}=s_{l^\prime}^j\\
 \varrho(f_2^{-1}f_1^{-1}(s_l^j))&=\varrho(s_l^{\sigma(j)})=s_{l^\prime}^j.
 \end{align*}

If $x=r_l^k\in\Upsilon_k$ and $\varrho(r_l^k)=r_{l^\prime}^{k^\prime}$, then
  \begin{align*}
 f_2^{-1}(f_1^{-1}\varrho(r_l^k))&= f_2^{-1}(r_{l^\prime}^{\tau(k^\prime)})=r_{l^\prime}^{\tau(k^\prime)}=r_{l^\prime}^k\\
 \varrho(f_2^{-1}f_1^{-1}(r_l^k))&=\varrho(r_l^{\tau(k)})=r_{l^\prime}^k.
 \end{align*}

If $x=t_l^i \in T_i$  and  $\varrho(t_l^i)= t_{l^\prime}^{i^\prime}$, then
 \begin{align*}
 f_2^{-1}(f_1^{-1}\varrho(t_l^i))&= f_2^{-1}( t_{l^\prime}^{\sigma(i^\prime)}= t_{l^\prime}^{\tau \sigma(i^\prime)}= t_{l^\prime}^i  \\
 \varrho(f_2^{-1}f_1^{-1}(t_l^i))&=\varrho(t_l^{\tau\sigma(i)})=t_{l^\prime}^{\sigma\tau\tau\sigma(i)}=t_{l^\prime}^i.
 \end{align*}
This completes the proof of this case.

 \item[b] Suppose  {$\xi=f_1 \in L$} and  {$\zeta = I \in M$}. Then  $\varrho(\Theta_j)=\Theta_{\sigma(j)}$, $\varrho(\Upsilon_k)=\Upsilon_{k}$ and $\varrho(T_i)=T_{\sigma(i)}$. It is enough to show that
 $f_1^{-1}\varrho(x)=\varrho(f_1^{-1}(x))$. If  {$x=s_l^j\in\Theta_j$} and {$\varrho(s_l^j)=s_{l^\prime}^{j^\prime}$}, then
\begin{align*}
(f_1^{-1}\varrho(s_l^j))&= (s_{l^\prime}^{\sigma(j^\prime)})=s_{l^\prime}^{\sigma(j^\prime)}=s_{l^\prime}^j\\
 \varrho(f_1^{-1}(s_l^j))&=\varrho(s_l^{\sigma(j)})=s_{l^\prime}^j.
 \end{align*}
 If {$x=r_l^k\in\Upsilon_k$} and {$\varrho(r_l^k)=r_{l^\prime}^{k^\prime}$}, then
 \begin{align*}
(f_1^{-1}\varrho(r_l^k))&=f_1(r_{l^\prime}^{\sigma(k)})=r_{l^\prime}^k\\
 \varrho(f_1^{-1}(r_l^k))&=\varrho(r_l^{\sigma(k)})=r_{l^\prime}^k.
 \end{align*}
 If {$x=t_l^i \in T_i$}  and  {$\varrho(t_l^i)= t_{l^\prime}^{i^\prime}$}, then
 \begin{align*}
(f_1^{-1}\varrho(t_l^i))&=f_1(  t_{l^\prime}^{\sigma(i^\prime)})= t_{l^\prime}^{ \sigma(i^\prime)}= t_{l^\prime}^i\\
 \varrho(f_1^{-1}(t_l^i))&=\varrho(t_l^{\sigma(i)})=t_{l^\prime}^{\sigma\sigma(i)}=t_{l^\prime}^i.
 \end{align*}
This completes the proof of this case.

 \item[c] Suppose that   {$\xi=I \in L$} and  {$\zeta =f_2 \in M$}. Then  {$\varrho(\Theta_j)=\Theta_{j}$},  {$\varrho(\Upsilon_k)=\Upsilon_{\tau(k)}$} and  {$\varrho(T_i)=T_{\tau(i)}$}. If  {$x=s_l^j\in\Theta_j$} and  {$\varrho(s_l^j)=s_{l^\prime}^{j^\prime}$}, then
 \begin{align*}
 f_2^{-1}(f_1^{-1}\varrho(s_l^j))&= s_{l^\prime}^{j}\\
 \varrho(f_2^{-1}f_1^{-1}(s_l^j))&=\varrho(s_l^{j})=s_{l^\prime}^j.
 \end{align*}
If  {$x=r_l^k\in\Upsilon_k$} and {$\varrho(r_l^k)=r_{l^\prime}^{k^\prime}$}, then
  \begin{align*}
 f_2^{-1}(f_1^{-1}\varrho(r_l^k))&= f_2^{-1}(r_{l^\prime}^{\tau(k^\prime)})=r_{l^\prime}^{\tau(k^\prime)}=r_{l^\prime}^k\\
 \varrho(f_2^{-1}f_1^{-1}(r_l^k))&=\varrho(r_l^{\tau(k)})=r_{l^\prime}^k.
 \end{align*}
 If  {$x=t_l^i \in T_i$} and  {$\varrho(t_l^i)= t_{l^\prime}^{i^\prime}$}, then
 \begin{align*}
 f_2^{-1}(f_1^{-1}\varrho(t_l^i))&= f_2^{-1}( t_{l^\prime}^{i^\prime}= t_{l^\prime}^{\tau (i^\prime)}= t_{l^\prime}^i  \\
 \varrho(f_2^{-1}f_1^{-1}(t_l^i))&=\varrho(t_l^{\tau(i)})=t_{l^\prime}^{\tau\tau(i)}=t_{l^\prime}^i.
 \end{align*}
 Therefore, the conditions of this case also lead to our desired result.
 \end{enumerate}

Now we show that  every $\varrho \in Aut(\Delta)$ can be written in the form      $\phi_1\phi_2\phi_3\phi_4h_5h_6g_1g_2\xi\zeta$ in such a way that
$(\phi_i, g_k, h_j, \xi, \zeta) \in V_i \times G_k \times U_j \times L \times M,$
where $1 \leq i \leq 4$, $k = 1,2$, $j = 5,6$, $\xi \in L$ and $\zeta \in M$. Define:

\begin{center}
$\phi_i(x)=\begin{cases}
\zeta^{-1} \xi^{-1}\varrho(x) =\varrho \zeta^{-1} \xi^{-1}(x)  &     x=t_l^i \in T_i/\{t_1^i\}\\
x & otherwise
\end{cases}   \in V_i$ \  \ $h_j(x) = \begin{cases}
\zeta^{-1} \xi^{-1}\varrho(x) =\varrho \zeta^{-1} \xi^{-1}(x)    & x=s_l^j \in \Theta_j/\{s_1^j\}\\
x & otherwise
\end{cases}   \in U_j$
$g_k(x)=\begin{cases}
\zeta^{-1} \xi^{-1}\varrho(x) =\varrho \zeta^{-1} \xi^{-1}(x)  & x=r_l^k \in \Upsilon_k/\{r_1^k\}\\
x  & otherwise
\end{cases} \in G_k$
\end{center}

We are ready to prove  that $ \phi_i$ is an automorphism. To prove $\phi_i$ is one to one, we assume that $x, x^\prime \in V(\Delta)$ with $x \ne x^\prime$ are arbitrary. We have to show that $\phi_i(x) \ne  \phi_i(x^\prime)$. To do this, the following two cases will be considered:
\begin{enumerate}
\item[(i)] $x, x^\prime \in T_i$. Since $\varrho, \xi, \zeta$ are permutations of $V(\Delta)$, $\phi_i(x)$ $=$ $\varrho\zeta^{-1}\xi^{-1}(x)$ $\ne$ $\varrho\zeta^{-1}\xi^{-1}(x^\prime)$ $=$ $\phi_i(x^\prime)$, as desired.

\item[(ii)] $x\in T_i, x^\prime \not\in \varrho(T_i)$. If $\varrho(T_i) = T_i$, then will have the case (i) and there is noting to prove. Suppose that $\varrho(T_i) \ne T_i$. This implies that $\varrho(T_i) \in \{ T_{\sigma(i)}, T_{\tau(i)}, T_{\sigma\tau(i)} \}$. If $x^\prime \in \varrho(T_i)$, $\xi\in L$ and $\zeta\in M$ then $\phi_i(x^\prime) = x^\prime \not\in T_i$ and $\phi_i(x) = \zeta^{-1}\xi^{-1}\varrho(x)  \in T_i$ and so $\phi_i(x) \ne \phi_i(x^\prime)$, as desired.
\end{enumerate}

Next we prove that $\phi_i$ is homomorphism. To do this, we assume that $u$ and $v$ are adjacent in $\Delta$. Then one of the following cases will be occurred:
\begin{enumerate}
\item If $u, v \in V(T_i)$, then $\phi_i(uv)=\zeta^{-1} \xi^{-1}\varrho(uv)=\zeta^{-1} \xi^{-1}\varrho (u) \zeta^{-1} \xi^{-1}\varrho  (v)=\phi_i(u)\phi_i(v) \in E(\Delta)$. Since $\varrho,$ $\zeta$ and $\xi$ are automorphism, they preserve adjacency in $\Delta$ and so $\phi_i$ has the same property.

\item If $v \not\in T_i$ and $u \in T_i$, then $ u = t_1^i$ and $\phi_i(uv)=uv=\phi_i(u)\phi_i(v)\in E(\Delta)$, as desired.

\item If $u, v \not\in T_i$, then $\phi_i(uv)=uv=\phi_i(u)\phi_i(v)\in E(\Delta)$.
\end{enumerate}
Next, we prove that
{$\phi_i^{-1}(x)=\begin{cases}
 \xi \zeta\varrho^{-1}(x)  &     x=t_l^i \in T_i/\{t_1^i\}\\
x & otherwise
\end{cases}$}
is also graph homomorphism. To do this, we assume that $u$ and $v$ are adjacent vertices in $\Delta$. Then, one of the following three cases can be occurred:
\begin{enumerate}
\item[(I)] $u,v \in V(T_i)$. Since $\xi$, $\zeta$ and $\varrho$ are automorphism, {$\phi_i^{-1}(uv)=\xi\zeta \varrho^{-1}(uv)= \xi\zeta\varrho^{-1} (u) \xi \zeta\varrho^{-1}  (v)=\phi_i^{-1}(u)\phi_i^{-1}(v) \in E(\Delta)$}, as desired.

\item[(II)]  $u\in T_i$ and $v \notin T_i $. In this case,  {$u=t_1^i$} and
{$\phi_i^{-1}(uv)=uv=\phi_i^{-1}(u)\phi_i^{-1}(v)\in E(\Delta)$}.

\item[(IV)] {$u,v \notin T_i$}. As similar argument as above shows that  {$\phi_i^{-1}(uv)=uv=\phi_i^{-1}(u)\phi_i^{-1}(v)\in E(\Delta)$}.
\end{enumerate}
To complete the proof, we note that
\begin{eqnarray*}
\phi_1 \phi_2 \phi_3\phi_4 h_5 h_6 g_1 g_2 \xi  \zeta  (t_k^i)
&=&\varrho \zeta^{-1} \xi^{-1}\xi \zeta (t_k^i)=\varrho(t_k^i)\\
\phi_1 \phi_2 \phi_3\phi_4 h_5 h_6 g_1 g_2 \xi  \zeta   (s_k^j) &=&\varrho \zeta^{-1} \xi^{-1}\xi \zeta (s_k^j) =\varrho(s_k^j)\\
\phi_1 \phi_2 \phi_3\phi_4 h_5 h_6 g_1 g_2 \xi  \zeta   (r_k^l)&=&\varrho \zeta^{-1} \xi^{-1}\xi \zeta  (r_k^l)=\varrho(r_k^l).
\end{eqnarray*}

This completes the proof.
\end{proof}

Define the functions $\xi_1, \xi_2 : \mathbb{N} \longrightarrow \mathbb{N}$ by
$$\xi_1(n) = \left \{ \begin{array}{ll}  \frac{n}{2}  & 2 \mid n \\ \frac{n-1}{2} & 2 \nmid n \end{array}\right. \ \ \text{and} \ \ \xi_2(n) = \left \{ \begin{array}{ll}  \frac{n}{2} + 1  & 2 \mid n \\ \frac{n+3}{2} & 2 \nmid n \end{array}\right..$$

\begin{cor} \label{cor2.3} Let $\Delta$ be an arbitrary bicyclic graph of the first type depicted in Figure \ref{2} and $T^i_j \cong T^r_j$, for each $i, j, r$ such that $1 \leq j \leq k$ and $1 \leq i,r \leq 4$. Then,
\begin{align*}
Aut(\Delta)&=(Aut(T_1^1))_{a_1} \times \cdots \times (Aut(T_k^1))_{a_k} \times (Aut(T_1^1))_{b_1} \times \cdots (Aut(T_k^2))_{b_k} \\
&\times
(Aut(\Upsilon_1))_{u_1} \times (Aut(\Upsilon_2))_{u_n} \times (Aut(T_1^3))_{c_1} \times \cdots \times (Aut(T_k^3))_{c_k}\\
&\times (Aut(T_1^4))_{d_1} \times \cdots \times (Aut(T_k^4))_{d_k} \times (Aut(\Theta_1))_{r_1} \times (Aut(\Theta_2))_{r_2} \rtimes_{\phi} \Bbb{Z}_2 \times \Bbb{Z}_2
\end{align*}
in which $\phi$ is a homomorphism from $\mathbb{Z}_2 \times \mathbb{Z}_2$ into $\mathcal{C}$ given by $\phi(0,0)=id$, $\phi(0,1)=\psi_1$,  $\phi(1,0)=\psi_2$ and $\phi(1,1)=\psi_3$. Here, $\mathcal{C}$,  $\psi_1$,  $\psi_2$ and $\psi_3$ are defined as follows:
\begin{align*}
\mathcal{C} &= Aut((Aut(T_1^1))_{a_1} \times \cdots \times (Aut(T_k^1))_{a_k} \times\cdots \times (Aut(\Upsilon_1))_{u_1} \times (Aut(\Upsilon_2))_{u_n} ),\\
\psi_1 &= (\alpha_1,\delta_1)...(\alpha_k,\delta_k)(\beta_1,\gamma_1)...(\beta_k,\gamma_k)(\mu_1,\mu_2)(u_1,u_n),(u_2,u_{n-1}) \cdots (u_{\xi_1(n)} u_{\xi_2(n)}), \\
\psi_2 &= (\alpha_1,\beta_1)\cdots (\alpha_k,\beta_k)(\gamma_1,\delta_1)\cdots(\gamma_k,\delta_k)(\epsilon_1,\epsilon_2)(u_1,u_n)(u_2,u_{n-1})\cdots(u_{\xi_1(n)} u_{\xi_2(n)}),\\
\psi_3 &= (\alpha_1,\gamma_1)\cdots(\alpha_k,\gamma_k)(\beta_1,\delta_1)\cdots(\delta_k,\beta_k)(\epsilon_1,\epsilon_2)(\mu_1,\mu_2)(u_1,u_n)\cdots (u_{\xi_1(n)} u_{\xi_2(n)}).
\end{align*}
\end{cor}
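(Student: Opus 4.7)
The plan is to mimic the proof of Lemma \ref{lem2} verbatim, replacing each of the eight stabilizer factors there by a longer chain of stabilizer factors corresponding to the trees $T_j^i$ ($1\le i\le 4$, $1\le j\le k$) distributed along the four arms of the cycle, together with the two factors from $\Theta_1,\Theta_2$ and the two from $\Upsilon_1,\Upsilon_2$, and to enlarge the path reflection so that it also matches the intermediate vertices $u_2,\ldots,u_{n-1}$ in pairs. The key structural input is that the bicyclic skeleton, after we forget which trees are attached, still carries only a Klein four-group of symmetries: one involution $f_1$ swaps the two cycles of $\Delta$ (interchanging $\Theta_1\leftrightarrow\Theta_2$ and the cross pairs $T_j^1\leftrightarrow T_j^3$, $T_j^2\leftrightarrow T_j^4$ for each $j$), while the second involution $f_2$ reflects the path $P_3$ end-to-end (interchanging $\Upsilon_1\leftrightarrow\Upsilon_2$, $T_j^1\leftrightarrow T_j^2$, $T_j^3\leftrightarrow T_j^4$, and matching $u_l\leftrightarrow u_{n+1-l}$; the formulas $\xi_1,\xi_2$ simply record which pairs of internal $P_3$-vertices are swapped, according to the parity of $n$). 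Since $f_1$ and $f_2$ act on essentially disjoint halves of the skeleton they commute, yielding $L\times M\cong \Bbb{Z}_2\times \Bbb{Z}_2$ exactly as in Lemma \ref{lem2}.

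Next I would introduce, for each attached tree, the stabilizer subgroup of $Aut(\Delta)$ fixing every vertex outside that tree together with the attachment vertex; call these $V_j^i$, $U_r$, $W_k$. By the preceding lemma they are isomorphic to $(Aut(T_j^i))_{a_j}$, $(Aut(\Theta_r))_{r_1}$ and $(Aut(\Upsilon_k))_{u_1}$ respectively, and so lie in $\mathcal{T}$. Two such subgroups belonging to distinct trees commute pointwise because they act on disjoint branches, so their product is an internal direct product and is moreover normal in the group generated by it together with $L\times M$. Conjugation of this direct product by $f_1$ permutes the factors exactly by $\psi_2$ and conjugation by $f_2$ by $\psi_1$ (and hence by $f_1 f_2$ by $\psi_3$), reproducing the semidirect product given in the statement.

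Finally I would prove that every $\varrho \in Aut(\Delta)$ decomposes uniquely as $\bigl(\prod \phi_j^i\bigr)\bigl(\prod h_r\bigr)\bigl(\prod g_k\bigr)\,\xi\zeta$ with the factors in the respective stabilizers and $(\xi,\zeta)\in L\times M$. Following the template of Lemma \ref{lem2}, any $\varrho$ must carry each tree to one of at most four designated trees (its $L\times M$-orbit), because the attachment vertices are forced by their graph-theoretic position within the cycles. So there is a unique $\xi\zeta\in L\times M$ such that $(\xi\zeta)^{-1}\varrho$ fixes each tree setwise together with its root; reading off its restriction to each tree yields the $\phi_j^i$, $h_r$, $g_k$, and all three preservation-of-adjacency checks (within a tree, at the attachment vertex, and outside the tree) go through exactly as in Lemma \ref{lem2}.

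The hardest part will be the correct bookkeeping for the path $P_3$: the permutation $f_2$ must be written explicitly in terms of $\xi_1$ and $\xi_2$ so that it is a graph automorphism in both parities of $n$ (in the even case a unique midpoint $z$ is fixed), and one must verify that conjugation by $f_2$ induces exactly the cycle product $(u_1,u_n)(u_2,u_{n-1})\cdots(u_{\xi_1(n)},u_{\xi_2(n)})$ appearing in $\psi_1$ and $\psi_3$. Everything else is a direct amplification of the eight-factor argument of Lemma \ref{lem2} to $4k+4$ factors, with the same commutation and normality checks performed branch by branch.
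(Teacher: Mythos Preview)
Your plan is sound and would yield a correct proof, but the paper takes a much shorter route. Rather than re-running the entire machinery of Lemma~\ref{lem2} with $4k+4$ factors, the paper simply \emph{collapses} each of the four arms into a single tree: it lets $\Lambda^j$ denote the induced subgraph on $\bigcup_{i=1}^{k}V(T_i^j)$ for $j=1,2,3,4$, observes that the hypothesis $T_j^i\cong T_j^r$ forces $\Lambda^1\cong\Lambda^2\cong\Lambda^3\cong\Lambda^4$ with condition~$(\star)$, and then applies Lemma~\ref{lem2} verbatim with $\Lambda^j$ playing the role of $T_j$. The large direct product in the statement then drops out from the factorisation $Aut(\Lambda^j)_{\{a_1,\ldots,a_k\}}\cong\prod_{i=1}^{k}(Aut(T_i^j))_{a_i}$, which holds because the $T_i^j$ are attached at distinct roots along the arm. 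This buys a two-line proof instead of a full replay of the eight-factor argument; your approach, by contrast, is self-contained and does not rely on the reader accepting that $\Lambda^j$ really fits the template of Lemma~\ref{lem2}.

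One small bookkeeping slip in your outline: you assign $f_1\leftrightarrow\psi_2$ and $f_2\leftrightarrow\psi_1$, but comparing your description of $f_1$ (swapping $\Theta_1\leftrightarrow\Theta_2$ and the cross pairs) with the cycle structure of $\psi_1$ in the statement (which swaps $\alpha_i\leftrightarrow\delta_i$, $\beta_i\leftrightarrow\gamma_i$, $\mu_1\leftrightarrow\mu_2$) shows the correspondence should be $f_1\leftrightarrow\psi_1$ and $f_2\leftrightarrow\psi_2$, and moreover your ``cross pairs'' should read $T_j^1\leftrightarrow T_j^4$, $T_j^2\leftrightarrow T_j^3$ to match $\psi_1$. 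This is purely a labelling issue and does not affect the validity of the argument.
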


\begin{proof}
The induced subgraph of $\cup_{i=1}^kV(T_i^j)$ is denoted by $\Lambda^j$, $1 \leq j \leq 4$. By assumption $\Lambda^1 \cong \Lambda^2 \cong \Lambda^3 \cong \Lambda^4$ and all of them satisfy the condition $(\star)$. Apply Lemma \ref{lem2}, we have:

{\small\begin{eqnarray*}
Aut(\Delta)&=&\Big[(Aut(\Lambda^1)_{\{a_1,\cdots, a_k\} }\times Aut(\Lambda^2)_{\{b_1,\cdots, b_k\}} \times Aut(\Lambda^3)_{\{c_1,\cdots, c_k\}}\Big]\\ &\times& \Big[Aut(\Lambda^4)_{\{d_1,\cdots, d_k\}} \times Aut(\Theta_2)_{r_1} \times Aut(\Theta_3)_{r_2} \times Aut(\Upsilon_1)_{u_1} \times Aut(\Upsilon_2)_{u_n} \Big]\\ &\rtimes_{\phi}& \Bbb{Z}_2 \times  \Bbb{Z}_2,
\end{eqnarray*}}
proving the result.
\end{proof}

\begin{figure}
\centering
\includegraphics[width=7cm]{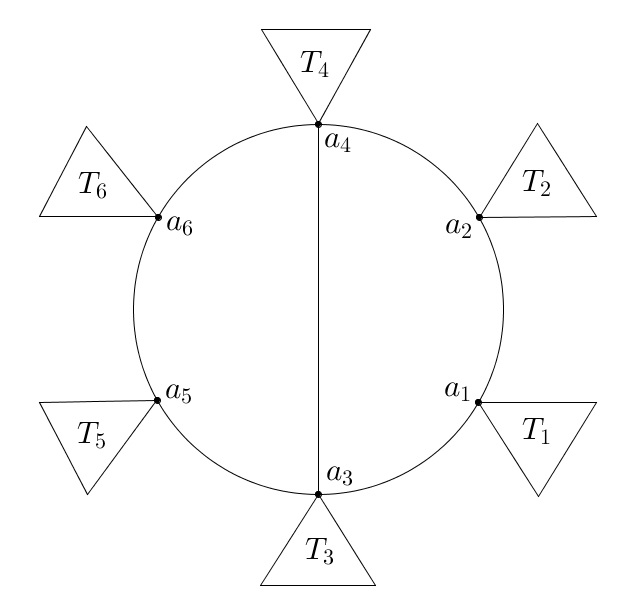}
\caption{The bicyclic graph of Lemma \ref{l2.3}.}\label{3}
\end{figure}

\begin{lem} \label{l2.3}
Suppose $T_1$, $T_2$, $\ldots$, $T_6$ are trees such that $T_1 \cong T_2$, $T_3 \cong T_4$, $T_5 \cong T_6$,  $G_1=Aut(T_1)_{a_1}\cong Aut(T_2)_{a_2}$, $G_2=Aut(T_3)_{a_3}\cong Aut(T_4)_{a_4}$  and $G_3 = Aut(T_5)_{a_5} \cong Aut(T_6)_{a_6}$, see Figure \ref{3}. Then,
$Aut(\Delta)=(G_1 \times G_2 \times G_3) \wr \Bbb{Z}_2$.
\end{lem}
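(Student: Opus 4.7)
The plan is to adapt the strategy of Lemma \ref{lem2}, using the simplification that only a single reflection involution arises here because the underlying bicyclic skeleton of $\Delta$ has symmetry group $\Bbb{Z}_2$ rather than $\Bbb{Z}_2\times\Bbb{Z}_2$. First I would exhibit an involution $f\in Aut(\Delta)$ that interchanges the paired trees $T_1\leftrightarrow T_2$, $T_3\leftrightarrow T_4$, $T_5\leftrightarrow T_6$ by acting through the given isomorphisms on each pair and by reflecting the two-cycle core; this $f$ is well-defined and preserves adjacency precisely because the attaching vertices $a_i$ are swapped by the reflection of the skeleton and the paired trees satisfy condition $(\star)$.

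Next, for each $i\in\{1,\ldots,6\}$, define the pointwise stabilizer
\[
V_i=\{\varphi\in Aut(\Delta)\mid \varphi(x)=x \text{ for all } x\in V(\Delta)\setminus V(T_i),\ \varphi(a_i)=a_i\},
\]
which, by the same argument as in Lemma \ref{lem2}, is canonically isomorphic to the vertex stabilizer $Aut(T_i)_{a_i}$. Hence $V_1\cong V_2\cong G_1$, $V_3\cong V_4\cong G_2$ and $V_5\cong V_6\cong G_3$. Because distinct $V_i$ have disjoint supports, they commute pairwise, so
\[
N:=V_1V_2V_3V_4V_5V_6 \cong V_1\times\cdots\times V_6 \cong (G_1\times G_2\times G_3)^2.
\]
A direct check, identical in spirit to the commutation computations in Lemma \ref{lem2}, shows that $f$ conjugates $V_1\leftrightarrow V_2$, $V_3\leftrightarrow V_4$ and $V_5\leftrightarrow V_6$, so $N$ is normalized by $\langle f\rangle\cong\Bbb{Z}_2$ and the induced action is exactly the coordinate swap on the two copies of $G_1\times G_2\times G_3$. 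This identifies $N\rtimes\langle f\rangle$ with $(G_1\times G_2\times G_3)\wr\Bbb{Z}_2$ inside $Aut(\Delta)$.

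The remaining and main step is surjectivity: every $\varrho\in Aut(\Delta)$ lies in $N\cdot\langle f\rangle$. Since $\varrho$ permutes the two cycles forming the skeleton of $\Delta$ and preserves vertex degrees, it must induce either the identity or the reflection on the skeleton, so there is a unique $\epsilon\in\{0,1\}$ for which $f^{-\epsilon}\varrho$ stabilizes each $T_i$ setwise and fixes each $a_i$. As in Lemma \ref{lem2}, one then defines $\phi_i\in V_i$ to coincide with $f^{-\epsilon}\varrho$ on $V(T_i)\setminus\{a_i\}$ and to be the identity elsewhere, verifies by the same case analysis (intra-tree edges, the edge at $a_i$, and edges outside $T_i$) that each $\phi_i$ is a graph automorphism, and concludes $\varrho=\phi_1\phi_2\phi_3\phi_4\phi_5\phi_6\,f^\epsilon$. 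The main obstacle is precisely the first half of this last step, i.e.\ proving that the induced action of an arbitrary $\varrho$ on the bicyclic skeleton is forced to lie in $\{\mathrm{id},f|_{\text{skeleton}}\}$; this uses the hypotheses $T_1\cong T_2$, $T_3\cong T_4$, $T_5\cong T_6$ together with the fact that after attaching the pairwise distinct trees the only remaining skeletal symmetry is the reflection. Once that is in hand, the remainder of the argument is routine bookkeeping transplanted from the proof of Lemma \ref{lem2}.
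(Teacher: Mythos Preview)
Your proposal is correct and follows essentially the same route as the paper: define the single involution $f$ (the paper's $f_1$, built from $\sigma=(1\,2)(3\,4)(5\,6)$), introduce the pointwise stabilizers $V_i$ (the paper's $U_i$), verify they commute pairwise and give an internal direct product isomorphic to $(G_1\times G_2\times G_3)^2$, and then decompose an arbitrary $\varrho\in Aut(\Delta)$ as $\phi_1\cdots\phi_6\,f^\epsilon$ with $\phi_i\in V_i$. Your write-up is in fact slightly cleaner about the surjectivity step---you make explicit that one first chooses $\epsilon$ so that $f^{-\epsilon}\varrho$ stabilizes each $T_i$ setwise, whereas the paper somewhat obscures this by starting with ``$\xi\in L$ arbitrary'' before implicitly selecting it from the action of $\alpha$---but the substance is identical.
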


\begin{proof}
Suppose $V(T_i)=\{t_1^i,\cdots, t_{k_i}^i\}$, $a_i = t_1^i$ and define $\sigma=(1 \ 2)(3 \ 4)(5 \ 6)$, $f_1 = (t_j^i   \ t_j^{\sigma(i)})$, $L=\{1,f_1\} $ and $ U_i=\{ \alpha \in Aut(\Delta) \mid \alpha (x) = x; \ x \notin T_i \ \& \  \alpha(t_1^i) = t_1^i \}$, $1 \leq i \leq 6$. Obviously,  $L$ and $ U_i$, $1 \leq i \leq 6$, are subgroups of $Aut(\Delta)$. It is easy to see that the mapping $\psi_i: Aut(T_i)_{a_i} \longrightarrow V_i$ given by $\psi_i(\alpha) = \alpha^\prime$ is an isomorphism in which $\alpha^\prime(x) = x,$ when $x = a_i$ or $x \not\in T_i$, and $\alpha^\prime(x) = \alpha(x)$, otherwise. Note that for each $f \in U_i$ and $h \in U_j$, $1 \leq i \ne j \leq 6$, $fh = hf$. This implies that $U = U_1U_2 \cdots U_6$ is a subgroup of $Aut(\Delta)$ and each subgroup $U_i$, $1 \leq i \leq 6$, is a normal subgroup of $U$. Since $U_i \cap U_1 \cdots U_{i-1}U_{i+1}\cdots U_6 = \{ id \}$, $U_1U_2 \cdots U_6 \cong U_1 \times \cdots \times U_6.$

To complete the proof, we show that $Aut(\Delta) = (U_1U_1 \cdots U_6)\cdot L$. To do this, we choose an arbitrary automorphism $\alpha$ in $Aut(\Delta)$.  Suppose  {$\alpha\in Aut(\Delta)$} and   {$\xi \in L$}  are arbitrary. We first show that  {$\alpha \xi^{-1}(x)= \xi^{-1} \alpha(x) $}. If {$\xi=I$} then obviously this equation is true. If {$\xi=f$}, then {$\alpha(T_i)=T_{\sigma(i)}$}. Assume that {$\alpha(t_l^i)=t_{l^\prime}^{\sigma(i)}$}. It is enough to show that ${\alpha f^{-1}(x) = f^{-1} \alpha(x)}$. To do this, we note that $\alpha f^{-1}(t_l^i)$ $=$ $\alpha(t_l^{\sigma(i)})$ $=$ $t_{l^{\prime}}^i$ and $f^{-1}\alpha(t_l^i)$ $=$ $f^{-1}(t_{l^\prime}^{\sigma(i)})$ $=$ $t_{l^\prime}^i$. Define:
$$\phi_i(x)=\begin{cases}
\alpha \xi^{-1}  (x)=\xi^{-1} \alpha(x)   &     x=t_k^i \in T_i/{t_1^i}\\
x  & otherwise
\end{cases}.$$

We claim that {$ \phi_i$} is an automorphism of $\Delta$. To prove {$ \phi_i$} is one to one, we assume that  {$x\neq x^{\prime}$}. We have two cases as follows:
\begin{enumerate}
\item[(I)]
$x,x^{\prime} \in T_i$. Since $\alpha$ and  $\xi$ are automorphism,   {$\alpha \xi^{-1}(x)\neq \alpha \xi^{-1}(x^\prime)$}, as desired.

\item[(II)] $x\in T_i$ and $x^{\prime}\notin T_i$. If {$\alpha(T_i)=T_i$}, then our we will have the case {$(I)$}. We  assume that {$\alpha(T_i)\neq T_i$}. Then $\alpha(T_i) = T_{\sigma(i)}$. If $x^{\prime}\in \alpha(T_i)$ and {$\xi \in L$}, then {$\phi_i(x^{\prime})=x^{\prime} \notin T_i$} and $\phi_i(x) = \xi^{-1}\alpha(x) \in T_i$ and so   $\phi_i(x^{\prime}) \neq \phi_i(x)$. If {$x^{\prime}\notin \alpha(T_i)$}, then $\phi_i(x^{\prime})=x^{\prime} \notin T_i$ and $\phi_i(x) = \xi^{-1}\alpha(x) \in T_i$. Again {$\phi_i(x^{\prime}) \neq \phi_i(x)$}, as desired.
\end{enumerate}
We are now ready to prove that $\phi_i$ is homomorphism. To see this, we assume that $u$ and $v$ are adjacent vertices of $\Delta$. Suppose {$u,v \in V(T_i)$}.  Then, $\phi_i(uv)= \xi^{-1}\alpha(uv)= \xi^{-1}\alpha (u) \xi^{-1}\alpha(v)$. Since both of $\xi$ and $\alpha$ are automorphism,  $\phi_i(uv)=\phi_i(u)\phi_i(v) \in E(\Delta)$, as desired. If $u\in T_i$ and $v \notin T_i $, then {$u=t_1^i$}   and $\phi_i(uv)=uv=\phi_i(u)\phi_i(v)\in E(\Delta)$, and if {$u,v \notin T_i$}, then  {$\phi_i(uv)=uv=\phi_i(u)\phi_i(v)\in E(\Delta)$}. This proves that  $\phi_i$ is homomorphism.
Next, we prove that {$\phi_i^{-1}(x)=\begin{cases}\xi \alpha^{-1}(x) &x=t_k^i \in T_i/{t_1^i}\\x  & otherwise
\end{cases}$} is also a homomorphism. Choose adjacent vertices $u, v \in V(\Delta)$.   Suppose {$u,v \in V(T_i)$}. Then, $\phi_i^{-1}(uv)= \xi\alpha^{-1}(uv)= \xi\alpha^{-1} (u) \xi\alpha^{-1}(v)$ and since both of $\xi$ and $\alpha$ are automorphism,  $\phi_i^{-1}(uv)=\phi_i^{-1}(u)\phi_i^{-1}(v) \in E(\Delta)$. We now assume that $u\in T_i$ and $v \notin T_i$. Then, {$u=t_1^i$} and we have $\phi_i^{-1}(uv)=uv=\phi_i^{-1}(u)\phi_i^{-1}(v)\in E(\Delta)$. If {$u,v \notin T_i$}, then $\phi_i^{-1}(uv)=uv=\phi_i^{-1}(u)\phi_i^{-1}(v)\in E(\Delta)$.

Hence $Aut(\Delta) = U_1U_2 \cdots U_6 \cdot L$.  Since $|L| = 2$ and   $U_1U_2 \cdots U_6$ $\cong$ $U_1 \times U_2 \times \cdots \times U_6$, $Aut(\Delta) = (U_1 \times U_2 \times \cdots \times U_6) \rtimes L$ $\cong$ $(G_1 \times G_2 \times G_3) \rtimes Z_2$, proving the lemma.
\end{proof}

\centering
\begin{figure}
\includegraphics[width=10cm]{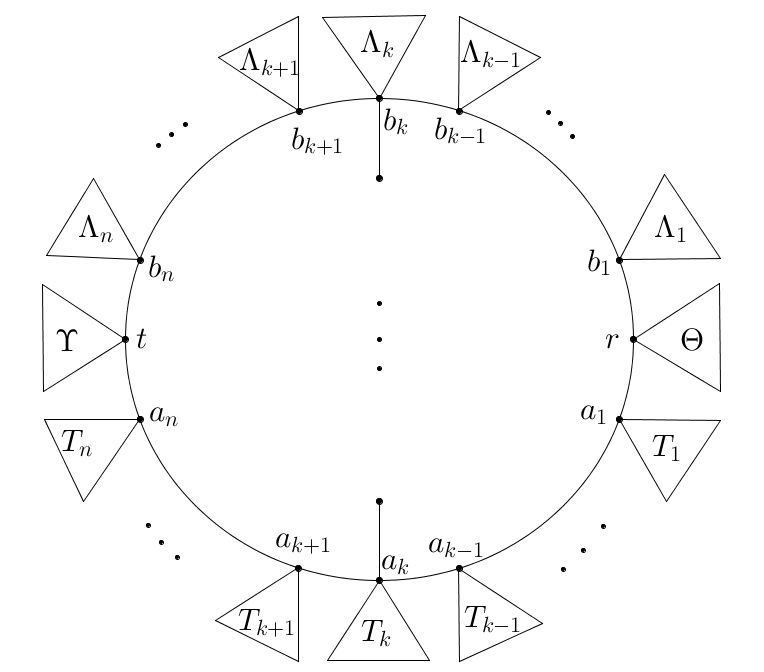}
\caption{ The graph $\overline{\mho}$ in Corollary \ref{cor2.4}.}
\label{4}
\end{figure}

\begin{cor} \label{cor2.4}
Suppose $G_i=(Aut(T_i))_{a_i}=(Aut(\Lambda_i))_{b_i}$, $1 \leq i \leq n$,  $H=(Aut(\Upsilon))_t$ and $K=(Aut(\Theta))_{r}$, where $t$ and $r$ are shown in the graph $\overline{\mho}$ depicted in Figure \ref{4}. Then,
$Aut(\Delta)=H \times K \times (G_1 \times G_2 \times \cdots \times G_n)\wr \Bbb{Z}_2$.
\end{cor}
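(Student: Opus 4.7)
The plan is to mimic the proof of Lemma \ref{l2.3}, treating the $n$ symmetric pairs $(T_i,\Lambda_i)$ exactly as Lemma \ref{l2.3} treated the three pairs $(T_1,T_2), (T_3,T_4), (T_5,T_6)$, and then to show that the two ``distinguished'' trees $\Upsilon$ and $\Theta$ produce no swap and therefore contribute as direct factors. The first step is to identify the natural involution $f$ of $\Delta$ that swaps $T_i \leftrightarrow \Lambda_i$ (vertex-by-vertex, using the fixed isomorphisms $T_i \cong \Lambda_i$) while fixing every vertex of $\Upsilon$, $\Theta$ and of the path/cycle spine. Because $\Upsilon$ and $\Theta$ are attached at distinguished positions $t$ and $r$ of $\overline{\mho}$ (positions which are not paired up with any mirror tree), no automorphism of $\Delta$ can move $\Upsilon$ off $\Upsilon$ or $\Theta$ off $\Theta$; hence every $\alpha \in Aut(\Delta)$ satisfies $\alpha(\Upsilon)=\Upsilon$ and $\alpha(\Theta)=\Theta$, and moreover $\alpha$ fixes the attachment vertices $t$ and $r$.

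Next, for each $i = 1, \ldots, n$ I would define stabilizer subgroups
\[
U_{T_i} = \{\alpha\in Aut(\Delta): \alpha|_{V(\Delta)\setminus V(T_i)}=\mathrm{id},\ \alpha(a_i)=a_i\},
\]
and analogously $U_{\Lambda_i}$, $U_\Upsilon$, $U_\Theta$. By the usual ``plant a tree'' device (as in Lemma 1.1), these are isomorphic to $G_i$, $G_i$, $H$, $K$ respectively. Exactly as in Lemma \ref{l2.3}, any two of these stabilizer subgroups commute (they are supported on disjoint vertex sets and each fixes the attachment vertex of the other), so their product is their internal direct product $U = U_\Upsilon \times U_\Theta \times \prod_{i=1}^n (U_{T_i}\times U_{\Lambda_i})$, isomorphic to $H\times K\times (G_1\times\cdots\times G_n)^2$.

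For an arbitrary $\alpha\in Aut(\Delta)$, let $\xi\in L=\{1,f\}$ be chosen so that $\xi^{-1}\alpha$ fixes every $T_i$ and $\Lambda_i$ setwise; the same commutation identity $\alpha\xi^{-1}=\xi^{-1}\alpha$ proved in Lemma \ref{l2.3} goes through verbatim, and we can then define pieces $\phi_{T_i}, \phi_{\Lambda_i}, \phi_\Upsilon, \phi_\Theta$ by the same case-split formula, each of which is checked to be an automorphism of $\Delta$ by repeating the two-case injectivity argument and the three-case edge-preservation argument from Lemma \ref{l2.3}. This shows $Aut(\Delta) = U\cdot L$ with $U\cap L=\{1\}$, so $Aut(\Delta) = U\rtimes L$. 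The action of $L$ on $U$ fixes the factors $U_\Upsilon$ and $U_\Theta$ pointwise (since $f$ fixes $\Upsilon$ and $\Theta$ pointwise), and swaps $U_{T_i}\leftrightarrow U_{\Lambda_i}$ for every $i$ simultaneously. The fixed part splits off as a direct factor $H\times K$, and the remaining semidirect product is by definition $(G_1\times\cdots\times G_n)\wr \mathbb{Z}_2$, giving the claimed decomposition.

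The main obstacle I expect is the first step: rigorously justifying that $\Upsilon$ and $\Theta$ are rigidly attached (i.e.\ cannot be permuted with any $T_i$ or $\Lambda_j$, and that their attachment points $t, r$ are fixed by every element of $Aut(\Delta)$). This is a combinatorial claim about the geometry of $\overline{\mho}$ and ultimately rests on the position of $t,r$ being on the two cycles at vertices structurally different from the $a_i, b_i$. Once that rigidity is established, the rest of the argument is a direct (if notationally heavier) transcription of Lemma \ref{l2.3}, and the step that changes ``$\wr\mathbb{Z}_2$ applied to $G_1\times G_2\times G_3$'' to ``$\wr\mathbb{Z}_2$ applied to $G_1\times\cdots\times G_n$'' requires no new idea, because the single involution $f$ acts on all $n$ symmetric pairs simultaneously.
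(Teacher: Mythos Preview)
Your proposal is correct and matches the paper's intent: the paper states Corollary~\ref{cor2.4} without proof, treating it as an immediate extension of Lemma~\ref{l2.3}, and your plan---generalize the three symmetric pairs of Lemma~\ref{l2.3} to $n$ pairs under the same involution $f$, and argue that $\Upsilon,\Theta$ sit at rigid positions of $\overline{\mho}$ so their stabilizers $H,K$ centralize $L$ and split off as direct factors---is exactly the argument the paper is tacitly invoking. Your identification of the rigidity of $t$ and $r$ as the one genuinely new step (absent from Lemma~\ref{l2.3}) is accurate, and once granted, the rest is indeed a straightforward transcription.
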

\begin{center}
\begin{figure}
\includegraphics[width=6cm]{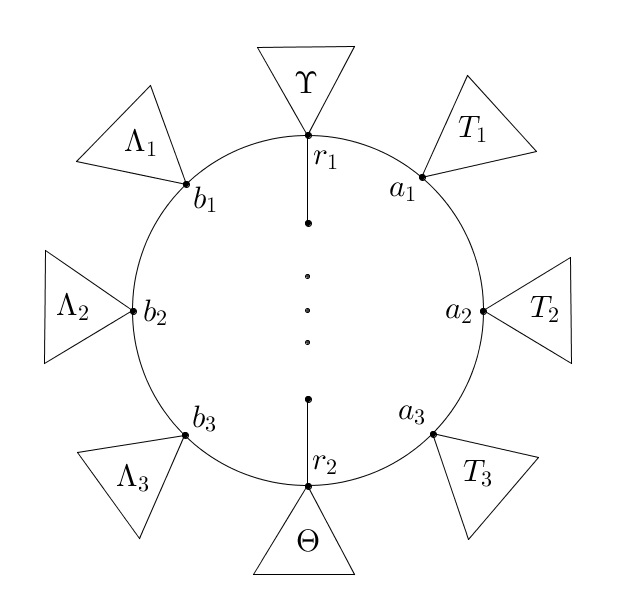}
\caption{ The figure for Lemma \ref{l2.5}.}\label{5}
\end{figure}
\end{center}

\begin{lem}\label{l2.5}
Suppose $G_i=(Aut(T_i))_{a_i}=(Aut(\Lambda_i))_{b_i}$, $H=(Aut(\Upsilon))_t$ and $K=(Aut(\Theta))_r$, see Figure \ref{5}. Then,
$Aut(\Delta)=H \times K \times (G_1 \times G_2 \times G_3) \wr \Bbb{Z}_2$.
\end{lem}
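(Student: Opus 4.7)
The plan is to mirror the argument of Lemma \ref{l2.3} almost verbatim, accounting for the two extra pendant trees $\Upsilon$ and $\Theta$ which are attached at distinguished (hence fixed) vertices of $\Delta$. First I would set up notation in parallel with Lemma \ref{l2.3}. Write $V(T_i)=\{t_1^i,\ldots,t_{k_i}^i\}$ with $a_i=t_1^i$, $V(\Lambda_i)=\{s_1^i,\ldots,s_{m_i}^i\}$ with $b_i=s_1^i$, $V(\Upsilon)=\{u_1,\ldots,u_p\}$ with $t=u_1$, and $V(\Theta)=\{r_1,\ldots,r_q\}$ with $r=r_1$. Define $f$ to be the involution on $V(\Delta)$ that swaps $t_j^i\leftrightarrow s_j^i$ for all $i,j$ and fixes every vertex of $\Upsilon$ and $\Theta$ and the two cycle-connecting vertices; set $L=\{1,f\}$. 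Define the ``single-branch'' stabilizer subgroups
$U_i=\{\alpha\in Aut(\Delta)\mid \alpha(x)=x \text{ for } x\notin T_i,\ \alpha(a_i)=a_i\}$, and analogously $W_i$ for $\Lambda_i$, $P$ for $\Upsilon$ at $t$, and $Q$ for $\Theta$ at $r$. By the same ``attach-an-asymmetric-tree'' argument (or directly from Lemma 3.1), $U_i\cong W_i\cong G_i$, $P\cong H$, $Q\cong K$.

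The central structural observation is that $t$ and $r$ are distinguished vertices of $\Delta$: they are exactly the two vertices at which a cycle meets a nontrivial pendant tree ($\Upsilon$ or $\Theta$), so any automorphism of $\Delta$ must fix each of them, and must therefore map $\Upsilon$ bijectively to $\Upsilon$ and $\Theta$ bijectively to $\Theta$. Consequently, the actions on $\Upsilon\setminus\{t\}$ and $\Theta\setminus\{r\}$ are independent of everything else, which immediately produces $H$ and $K$ as direct factors of $Aut(\Delta)$. I would record this as a short preliminary claim and use it to reduce the problem to computing the automorphism group of the residual graph $\Delta'=\Delta\setminus\big((V(\Upsilon)\cup V(\Theta))\setminus\{t,r\}\big)$.

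Next I would show, exactly as in Lemma \ref{l2.3}, that all the subgroups in the family $\{U_1,W_1,U_2,W_2,U_3,W_3,P,Q\}$ pairwise commute (since any two of them act nontrivially on disjoint vertex sets), and that pairwise intersections are trivial; hence their product $N=U_1W_1U_2W_2U_3W_3PQ$ is an internal direct product inside $Aut(\Delta)$. The involution $f\in L$ normalises $N$: it fixes $P$ and $Q$ elementwise (via conjugation) and swaps $U_i\leftrightarrow W_i$ for each $i$ simultaneously. Grouping the $f$-action, $\bigl((U_1\times U_2\times U_3)\times(W_1\times W_2\times W_3)\bigr)\rtimes L$ is precisely the wreath product $(G_1\times G_2\times G_3)\wr\mathbb{Z}_2$, so $N\cdot L\cong H\times K\times (G_1\times G_2\times G_3)\wr\mathbb{Z}_2$.

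The main obstacle, and the only step requiring real work, is the surjectivity: every $\alpha\in Aut(\Delta)$ must be expressible as a product of elements from these subgroups with an element of $L$. I would handle this exactly as in Lemma \ref{l2.3}: given $\alpha\in Aut(\Delta)$, choose $\xi\in L$ so that $\alpha\xi^{-1}$ sends each $T_i$ to itself (and each $\Lambda_i$ to itself, and $\Upsilon$, $\Theta$ to themselves, which is automatic). Then verify the commutation identity $\alpha\xi^{-1}(x)=\xi^{-1}\alpha(x)$ on each branch, and define
\[
\phi_i(x)=\begin{cases}\xi^{-1}\alpha(x) & x\in T_i\setminus\{a_i\},\\ x & \text{otherwise,}\end{cases}
\]
and analogously $\psi_i$ on $\Lambda_i$, $h$ on $\Upsilon\setminus\{t\}$, $k$ on $\Theta\setminus\{r\}$. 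The verifications that each $\phi_i$ (and its inverse) is a well-defined automorphism of $\Delta$, preserving edges across the attachment points $a_i$, and that the product $h\,k\,\phi_1\psi_1\phi_2\psi_2\phi_3\psi_3\,\xi$ recovers $\alpha$, are essentially the same line-by-line case analyses already carried out in Lemma \ref{l2.3}. Since the extra branches $\Upsilon$, $\Theta$ sit at fixed vertices, they introduce no new case; they only contribute the extra direct factors $H$ and $K$, yielding the claimed decomposition.
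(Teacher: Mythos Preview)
Your proposal is correct and matches the paper's approach exactly: the paper's proof simply reads ``The proof is similar to the proof of Lemma~\ref{l2.3} and so it is omitted,'' and you have carried out precisely that adaptation, with the only new ingredient being the observation that $\Upsilon$ and $\Theta$ are attached at distinguished (hence fixed) vertices and therefore contribute direct factors $H$ and $K$.
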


\begin{proof}
The proof is similar to the proof of Lemma \ref{l2.3} and so it is omitted.
\end{proof}

\begin{center}
\begin{figure}
\includegraphics[width=14cm]{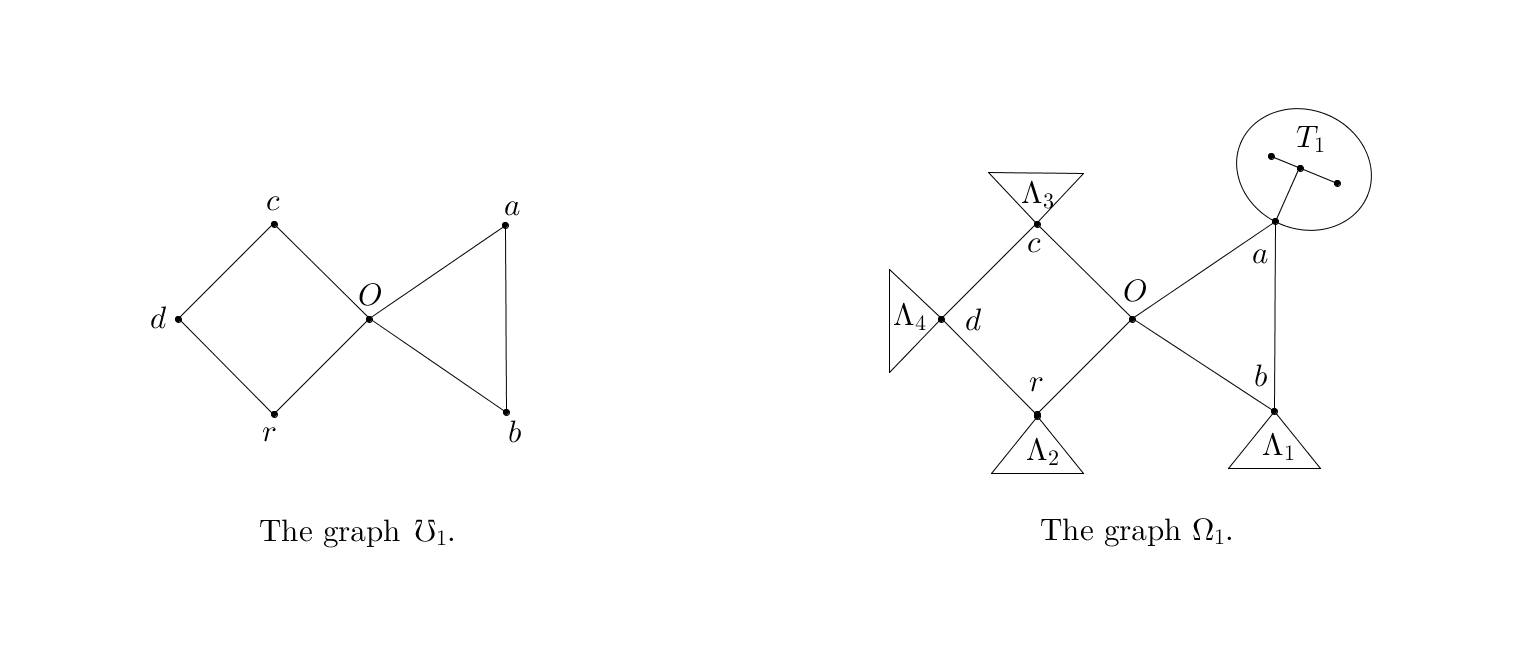}
\caption{Two graphs presented in the proof of Theorem 2.7.}
\label{6,7}
\end{figure}
\end{center}

Note that if a given tree $T$ has a central vertex $v$ then for each automorphism $\alpha \in Aut(T)$, $\alpha(v) = v$. For other type of trees, we will have the following definition.

\begin{defn}\label{d1} Suppose $G = Aut(T) \in \mathcal{T}$, $Fix(G) = \emptyset$ and $u, v$ are central vertices of $T$. It is well-known that $uv \in E(T)$. Add the vertex $u_T$ in the middle of $uv$, join vertices $u, v$ with $u_T$ and add another vertex $v_T$ together with the edge $u_Tv_T$ to construct a new tree $\overline{T}$.
\end{defn}

\begin{rem} By Definition \ref{d1}, $V(\overline{T}) =  V(T) \cup \{ u_T, v_T\}$ and $E(\overline{T}) =  (E(T) \setminus \{ uv\}) \cup \{ uu_T,u_Tv,u_Tv_T\}$. Also, it is easy to see that $Aut(T) \cong Aut(\overline{T})$.
\end{rem}

\begin{thm}
Every member of  $\mathcal{S}$ is isomorphic to the automorphism group of a bicyclic graph.
\end{thm}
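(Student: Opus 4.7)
The plan is to dispose of the three pieces of $\mathcal{S}=\mathcal{T}\cup\mathcal{B}_1\cup\mathcal{B}_2$ one at a time, exhibiting in each case an explicit bicyclic graph whose automorphism group has the required form. For $G\in\mathcal{T}$, invoke Jordan's theorem to pick a tree $T_0$ with $Aut(T_0)=G$, and, if the centre of $T_0$ is an edge, pass to $\overline{T_0}$ (Definition~\ref{d1}) to obtain a tree $T$ with the same automorphism group but a single central vertex $v$ fixed by every element of $Aut(T)$; in particular $Aut(T)_v=G$. Fix once and for all a rooted asymmetric bicyclic graph $(B,w)$---such gadgets exist, for example take two triangles sharing an edge and attach pairwise non-isomorphic pendant paths at the non-shared vertices until the automorphism group collapses to the trivial group---and arrange that some vertex of $B$ has a degree not occurring in $T$. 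Then $\Delta:=S(T,B;v,w)$ is bicyclic; the identified vertex $v=w$ is a cut vertex separating the tree $T-v$ from the bicyclic component $B-w$, and since only the latter contains cycles, every $\alpha\in Aut(\Delta)$ must fix $v=w$ and respect the partition, whence $Aut(\Delta)\cong Aut(T)_v\times Aut(B)_w=G\times 1=G$.

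For $G\in\mathcal{B}_2$, write $G=C\times[(D^4\times H^2\times K^2)\rtimes(\mathbb{Z}_2\times\mathbb{Z}_2)]$ with $C,D,H,K\in\mathcal{T}$. By the first lemma of the section and the same $\overline{T}$ trick, for each $X\in\{C,D,H,K\}$ there is a rooted tree $(T_X,v_X)$ with $(Aut(T_X))_{v_X}=X$. Build the graph $\Delta$ of Figure~\ref{1} taking the central path $P_3$ of \emph{even} length, so that its midpoint $z$ exists and is pointwise fixed by the whole $\mathbb{Z}_2\times\mathbb{Z}_2$ appearing in Lemma~\ref{lem2}. Attach four identified copies of $(T_D,v_D)$ at $a,b,c,d$, two copies of $(T_H,v_H)$ at $e_1,e_2$, two copies of $(T_K,v_K)$ at $u,v$, and a single copy of $(T_C,v_C)$ at $z$. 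Lemma~\ref{lem2} applied to the sub-graph obtained by forgetting the $T_C$-decoration identifies its automorphism group with $(D^4\times H^2\times K^2)\rtimes(\mathbb{Z}_2\times\mathbb{Z}_2)$. Because $z$ is a fixed point of this $\mathbb{Z}_2\times\mathbb{Z}_2$ and the automorphisms of $T_C$ fix its root $v_C=z$, the $T_C$-decoration contributes a direct factor $C$ commuting with everything else, whence $Aut(\Delta)\cong G$.

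The case $G\in\mathcal{B}_1$ follows from the preceding construction by specializing $H=K=\{1\}\in\mathcal{T}$ (realized by letting $T_H$ and $T_K$ be single vertices, so in effect no attachments are placed at $e_1,e_2,u,v$): those four skeleton vertices still carry distinctive degree profiles within the theta skeleton and the proof of Lemma~\ref{lem2} goes through to deliver the full $\mathbb{Z}_2\times\mathbb{Z}_2$ action on the four $T_D$-branches, so the automorphism group of the resulting bicyclic graph is $C\times(D^4\rtimes(\mathbb{Z}_2\times\mathbb{Z}_2))=C\times(D\wr(\mathbb{Z}_2\times\mathbb{Z}_2))=G$. The hard part throughout is rigidity: one must verify in each case that no unforeseen automorphism of $\Delta$ can swap a tree-branch with a piece of the skeleton or shuffle pieces intended to be fixed (such as the $T_C$-decoration, or the asymmetric gadget $B$ across the splice point). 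This reduces to choosing the attached trees so that their root-neighbourhood degree sequences are pairwise distinct and also distinct from what appears in the skeleton---routine but careful combinatorial bookkeeping; once it is settled, Lemma~\ref{lem2} and Jordan's theorem complete the group-theoretic identifications.
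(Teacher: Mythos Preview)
Your proposal is correct and follows the same three-case strategy as the paper, but the implementations differ in a couple of ways worth recording. For $W\in\mathcal{T}$ the paper uses a fixed skeleton $\mho_1$ (a $C_4$ and a $C_5$ sharing a vertex) and kills the unwanted symmetry by hanging four pairwise non-isomorphic asymmetric \emph{trees} at the remaining cycle vertices, whereas you build a single asymmetric bicyclic gadget and splice the tree on at one point; both work, and yours is arguably tidier. For $\mathcal{B}_1$ and $\mathcal{B}_2$ the paper introduces a separate skeleton $\mho_2$ and, in the $\mathcal{B}_1$ case, attaches \emph{eight} copies of the tree realising $D$; you instead stay with the theta skeleton of Lemma~\ref{lem2}, put $T_C$ at the midpoint $z$ of the third arc, and recover $\mathcal{B}_1$ as the degenerate case $H=K=\{1\}$ of $\mathcal{B}_2$. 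That unification is a genuine simplification---the action of $\mathbb{Z}_2\times\mathbb{Z}_2$ on the four $D$-slots in Lemma~\ref{lem2} is indeed the regular one, so $(D^4\times 1\times 1)\rtimes(\mathbb{Z}_2\times\mathbb{Z}_2)=D\wr(\mathbb{Z}_2\times\mathbb{Z}_2)$---and it avoids introducing a second skeleton.

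One caveat: Lemma~\ref{lem2} as stated does not have a tree hanging at $z$, so when you decorate $z$ with $T_C$ you are not literally citing the lemma but extending it. You also need to pin down the length of $P_3$ (taking it even and different from~$4$ suffices) so that no automorphism can exchange $P_3$ with $P_1$ or $P_2$; otherwise the ``rigidity bookkeeping'' you allude to may fail in the degenerate cases $D=H=K=\{1\}$. Finally, your phrase ``choosing the attached trees so that their root-neighbourhood degree sequences are pairwise distinct'' should be read as making $T_D,T_H,T_K,T_C$ mutually non-isomorphic as rooted trees (which is always possible by padding the root with paths of different lengths), not as distinguishing the four copies of $T_D$ from one another, which of course must remain isomorphic. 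With those points made explicit your argument is complete.
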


\begin{proof}
Suppose $W$ is an arbitrary element of $\mathcal{S}$ $=$ $\mathcal{T} \cup \mathcal{B}_1 \cup \mathcal{B}_2$. We first assume that $W \in \mathcal{T}$ and write  $W=Aut(T_1)$, where $T_1$ is a tree. There are two different cases that $Fix(T_1) \neq \emptyset$  or  $Fix(T_1)= \emptyset$.

\begin{enumerate}
\item $Fix(T_1) \neq \emptyset$. Suppose $v \in Fix(T_1)$. In the graph $\mho_1$ in Figure \ref{6,7}, we choose the cycles $C_4$ and $C_5$ containing a common vertex $O$ together with four non-isomorphic asymmetric trees $\Lambda_1$, $\Lambda_2$, $\Lambda_3$ and $\Lambda_4$ containing vertices $v_1$, $v_2$, $v_3$ and $v_4$, respectively. We also assume that $\Lambda_i \not\cong T_1$, for each natural number $i$ in $\{ 1, 2, 3, 4\}$. Unify mutually the vertices $(v,a), (v_1,b), (v_2,r), (v_3,c)$ and $(v_4,d)$ to construct a bicyclic graph $\Omega_1$. Then $Aut(\Omega_1) \cong (Aut(T_1))_v \cong W_v = W$.

\item $Fix(T_1) = \emptyset$. In this case the center of $T_1$ is containing a unique edge $\ell$. Add a vertex into the edge $\ell$ and connect it to the vertex $a$, see Figure \ref{6,7}. In a similar way as in Case $(1)$, we choose asymmetric trees $\Lambda_1$, $\Lambda_2$, $\Lambda_3$ and $\Lambda_4$ and form a graph $\Omega_1$ as in depicted in the Figure \ref{6,7}. From this figure one can be easily seen that  $Aut(\Omega_1) = Aut(T_1) = W$.
\end{enumerate}

Next we assume that  $W \in \mathcal{B}_1$. Then there are two trees $T_2$ and $\Psi_1$  such that $W \cong C_1 \times [D_2 \wr (\mathbb{Z}_2 \times \mathbb{Z}_2)]$, where $C_1 = Aut(\Psi_1)$ and $D_2 = Aut(T_2)$. Define the graph $\Omega_2$ as follows:

\begin{enumerate}
\item[I] \textit{$Fix(T_2)\neq \emptyset$ and $Fix(\Psi_1)\neq \emptyset$}. Choose  {$w_{T_2} \in Fix(T_2)$} and     {$w_{\Psi_1} \in Fix(\Psi_1)$}. Suppose   {${T_2}^1$}, {${T_2}^2$}, {${T_2}^3$}, {${T_2}^4$},  {${T_2}^5$}, {${T_2}^6$},  {${T_2}^7$} and  {${T_2}^8$} are eight isomorphic copies of $T_2$ in which  the image of {$w_{T_2} \in V(T_2)$} under these isomorphisms are   {$w_{T_2^1}$}, {$w_{T_2^2}$},  {$w_{T_2^3}$}, {$w_{T_2^4}$}, {$w_{T_2^5}$},  {$w_{T_2^6}$},  {$w_{T_2^7}$} and {$w_{T_2^8}$}, respectively. Define
    $\Omega_2$ $=$ $S(\mho_2$, $T_2^1$, $T_2^2, T_2^3, T_2^4$, $T_2^5, T_2^6$, $T_2^7, T_2^8, \Psi_1$; $a_1, w_{T_2^1}; a_2,w_{T_2^2}; a_3$, $w_{T_2^3}; a_4, w_{T_4^4}$; $b_1$, $w_{T_2^5}; b_2$, $w_{T_2^6}; b_3$, $w_{T_2^7}; b_4, w_{T_4^8}; c, w_{\Psi_1}).$

\item[II] \textit{$Fix(T_2) = \emptyset$ and $Fix(\Psi_1) \neq \emptyset$}. Consider the vertex {$w_{\Psi_1} \in Fix(\Psi_1)$} and the tree  {$\overline{T_2}$} in Definition \ref{d1}. We also assume that {$\overline{T_2}^1$},  {$\overline{T_2}^2$}, {$\overline{T_2}^3$}, {$\overline{T_2}^4$},  {$\overline{T_2}^5$}, {$\overline{T_2}^6$}, {$\overline{T_2}^7$} and  {$\overline{T_2}^8$} are eight isomorphic copies of {$\overline{T_2}$} and {$v_{T_2^1}$},  {$v_{T_2^2}$},  {$v_{T_2^3}$},  {$v_{T_2^4}$}, {$v_{T_2^5}$},  {$v_{T_2^6}$},  {$v_{T_2^7}$} and  {$v_{T_2^8}$} are the image of {$v_{T_2}$} under isomorphisms between  {$\overline{T_2}$} and trees {$\overline{T_2}^1$},  {$\overline{T_2}^2$}, {$\overline{T_2}^3$}, {$\overline{T_2}^4$},  {$\overline{T_2}^5$}, {$\overline{T_2}^6$}, {$\overline{T_2}^7$} and  {$\overline{T_2}^8$}, respectively. Define $\Omega_2$ $=$ $S(\mho_2, \overline{T}_2^1$, $\overline{T}_2^2$, $\overline{T}_2^3$, $\overline{T}_2^4, \overline{T}_2^5$, $\overline{T}_2^6, \overline{T}_2^7$, $\overline{T}_2^8$, $\Psi_1; a_1, v_{T_2^1}; a_2, v_{T_2^2}; a_3, v_{T_2^3}; a_4, v_{T_4^4}; b_1,v_{T_2^5}$; $b_2, v_{T_2^6}$; $b_3,v_{T_2^7}$; $b_4,v_{T_4^8};c,w_{\Psi_1})$.

\item[III] \textit{$Fix(D_2)\neq \emptyset$ and $Fix(C_1)=\emptyset$}. Suppose {$w_{T_2} \in Fix(D_2)$},  {$\overline{\Psi}_1$} and {$v_{\Psi_1}$} are those defined in Definition \ref{d1}. Define
    $\Omega_2$ $=$ $S(\mho_2,$ $T_2^1, T_2^2, T_2^3,$ $T_2^4$, $T_2^5$, $T_2^6,$ $T_2^7$, $T_2^8$, $\overline{\Psi}_1$; $a_1$, $w_{T_2^1}; a_2, w_{T_2^2}$; $a_3, w_{T_2^3}; a_4, w_{T_4^4};$ $b_1, w_{T_2^5}; b_2, w_{T_2^6}; b_3, w_{T_2^7}; b_4, w_{T_4^8}; c, v_{\Psi_1})$.

\item[IV] \textit{$Fix(D_2)= \emptyset$ and $Fix(C_1)=\emptyset$}. Consider the tree  {$\overline{T_2}$} as in Definition \ref{d1} together with eight isomorphic copies of this tree named as  {$\overline{T_2}^1$}, {$\overline{T_2}^2$}, {$\overline{T_2}^3$}, {$\overline{T_2}^4$},  {$\overline{T_2}^5$}, {$\overline{T_2}^6$}, {$\overline{T_2}^7$} and {$\overline{T_2}^8$}. Furthermore, we choose the vertices  {$v_{T_2^1}$}, {$v_{T_2^2}$}, {$v_{T_2^3}$},  {$v_{T_2^4}$},  {$v_{T_2^5}$}, {$v_{T_2^6}$},  {$v_{T_2^7}$} and {$v_{T_2^8}$} to be the image of  {$v_{T_2}$} under appropriate isomorphism between  {$\overline{T_2}$} and  {$\overline{T_2}^1$}, {$\overline{T_2}^2$}, {$\overline{T_2}^3$}, {$\overline{T_2}^4$},  {$\overline{T_2}^5$}, {$\overline{T_2}^6$}, {$\overline{T_2}^7$} and {$\overline{T_2}^8$}, respectively. We also assume that the tree {$\overline{\Psi}_1$} and the vertex {$v_{\Psi_1}$} are according to Definition \ref{d1}. Define
    $\Omega_2$ $=$ $S(\mho_2, \overline{T}_2^1, \overline{T}_2^2, \overline{T}_2^3, \overline{T}_2^4, \overline{T}_2^5, \overline{T}_2^6, \overline{T}_2^7, \overline{T}_2^8,$ $\overline{\Psi}_1; a_1, v_{T_2^1}; a_2, v_{T_2^2}; a_3, v_{T_2^3}; a_4, v_{T_4^4};$ $b_1, v_{T_2^5}; b_2, v_{T_2^6}; b_3, v_{T_2^7}; b_4, v_{T_4^8}; c, v_{\Psi_1}).$
    \end{enumerate}
In all cases,   {$Aut(\Omega_2)=C_1\times( D_2 \wr  (\Bbb{Z}_2 \times \Bbb{Z}_2))$}, as desired.

\hspace{5mm} Finally, we assume that $W\in \mathcal{B}_2$. Set $W=C_2\times(D_3 \times D_3 \times D_3 \times D_3 \times H \times H \times K \times K \rtimes (\Bbb{Z}_2 \times \Bbb{Z}_2)) $. Hence, there are trees  $T_3$, $\Lambda$, $\Upsilon$ and  $\Psi_2$ such that {$D_3 = Aut(T_3)$}, {$H=Aut(\Lambda)$},  {$K=Aut(\Upsilon)$} and $C_2=Aut(\Psi_2)$. Set $FE = \{ Fix(D_3), Fix(H), Fix(K), Fix(C_2)\}$. There are sixteen cases that for which one, two, three or all elements of $FE$ are empty.

\hspace{5mm} If $Fix(D_3) = \emptyset$, then we apply Definition \ref{d1} to construct  four copies  {$\overline{T_3}^1$}, {$\overline{T_3}^2$}, {$\overline{T_3}^3$} and {$\overline{T_3}^4$} of {$\overline{T}_3$} together with four vertices {$v_{T_3^1} \in V(\overline{T_3}^1)$}, {$v_{T_3^2} \in V(\overline{T_3}^2)$}, {$v_{T_3^3} \in V(\overline{T_3}^3)$} and  {$v_{T_3^4} \in V(\overline{T_3}^4)$} in which these four vertices are images of the vertex $v_{T_3}$ under some appropriate isomorphisms from $\overline{T}_3$ onto   {$\overline{T_3}^1$}, {$\overline{T_3}^2$}, {$\overline{T_3}^3$} and {$\overline{T_3}^4$}, respectively. If $Fix(D_3) \neq \emptyset$, then we will consider  four copies $T_3^1, T_3^2, T_3^3$ and $T_3^4$ of $T_3$ together with four vertices $w_{T_3^1} \in V({T_3}^1)$, $w_{T_3^2} \in V({T_3^2})$, $w_{T_3^3} \in V({T_3}^3)$ and $w_{T_3^4} \in V({T_3^4})$ in which these four vertices are images of the vertex  $w_{T_3} \in Fix(D_3)$ under some appropriate isomorphisms from ${T}_3$ onto   {${T_3}^1$}, {${T_3}^2$}, {${T_3}^3$} and {${T_3}^4$}, respectively.

\hspace{5mm} If $Fix(H) = \emptyset$, then we  construct  two copies  {$\overline{\Theta}^1$} and {$\overline{\Theta}^2$} of {$\overline{\Theta}$} together with two vertices $v_{\Theta^1} \in V(\overline{\Theta}^1)$, $v_{\Theta^2} \in V(\overline{\Theta}^2)$ in which these two vertices are images of the vertex $v_{\Theta} \in V(\overline{\Theta})$ under some appropriate isomorphisms from $\overline{\Theta}$ onto $\overline{\Theta}^1$ and $\overline{\Theta}^2$, respectively. If $Fix(H) \neq \emptyset$, then we consider two copies  {${\Theta}^1$} and {${\Theta}^2$} of {${\Theta}$} together with two vertices $w_{\Theta^1} \in V(\Theta^1)$, $w_{\Theta^2} \in V(\Theta^2)$ in which these vertices are images of the vertex $w_{\Theta} \in  Fix(H)$ under some appropriate isomorphisms from $\Theta$ onto $\Theta^1$ and $\Theta^2$, respectively.

\hspace{5mm} If $Fix(K) = \emptyset$, then we  construct  two copies  {$\overline{\Upsilon}^1$} and {$\overline{\Upsilon}^2$} of {$\overline{\Upsilon}$} together with two vertices $v_{\Upsilon^1} \in V(\overline{\Upsilon}^1)$, $v_{\Upsilon^2} \in V(\overline{\Upsilon}^2)$ in which these two vertices are images of the vertex $v_{\Upsilon} \in V(\overline{\Upsilon})$ under some appropriate isomorphisms from $\overline{\Upsilon}$ onto $\overline{\Upsilon}^1$ and $\overline{\Upsilon}^2$, respectively. If $Fix(K) \neq \emptyset$, then we consider two copies  {${\Upsilon}^1$} and {${\Upsilon}^2$} of {${\Upsilon}$} together with two vertices $w_{\Upsilon^1} \in V(\Upsilon^1)$, $w_{\Upsilon^2} \in V(\Upsilon^2)$ in which these vertices are images of the vertex $w_{\Upsilon} \in  Fix(K)$ under some appropriate isomorphisms from $\Upsilon$ onto $\Upsilon^1$ and $\Upsilon^2$, respectively.

\hspace{5mm} If $Fix(C_2) = \emptyset$ then we consider the vertex $v_{\Psi_2}$  by Definition \ref{d1}, and if $Fix(C_2)\neq \emptyset$ then we choose  $w_{\Psi_2} \in Fix(C_2)$. Next we define Set $\Omega_3$ $=$ $S(\mho_2,\widehat{T_3^1},$ $\widehat{T_3^2}, \widehat{T_3^2}, \widehat{T_3^4}$, $\widehat{\Theta^1}, \widehat{\Theta^2}$, $\widehat{\Upsilon^1}, \widehat{\Upsilon^2}$, $\widehat{\Psi_2};a_1$, $e_{T_3^1}$; $a_2,e_{T_3^2}$; $a_3,e_{T_3^3}$; $a_4,e_{T_3^4}$; $b_1,e_{\Theta^1}$; $b_2,e_{\Theta^2}$; $b_3,e_{\Upsilon^1}$; $b_4,e_{\Upsilon^2}$; $c,e_{\Psi_2})$. Here, for each tree $L$,
\begin{center}
$\widehat{L} = \left\{\begin{array}{ll} \overline{L} & Fix(L) = \emptyset\\ {L} & Fix(L) \neq \emptyset \end{array}\right.$ and $e_{L} = \left\{\begin{array}{ll} v_{L} & Fix(L) = \emptyset\\ w_{L} & Fix(L) \neq \emptyset \end{array}\right.$.
\end{center}
By our construction, $Aut(\Omega_3)=C_2\times( D_3 \times D_3 \times D_3 \times D_3 \times H \times H \times K \times K \rtimes (\Bbb{Z}_2 \times \Bbb{Z}_2))$ which completes the proof.
\end{proof}

\begin{center}
\begin{figure}
\includegraphics[width=8cm]{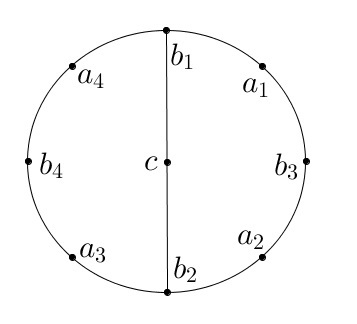}
\caption{The graph $\mho_2$.}
\label{8,9,10}
\end{figure}
\end{center}

\begin{thm}
 The automorphism group of every bicyclic graphs is a member of $\mathcal{S}$.
\end{thm}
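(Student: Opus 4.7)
The plan is to take an arbitrary bicyclic graph $\Delta$, classify it into one of the three types listed before Lemma \ref{lem2}, and in each type identify $Aut(\Delta)$ as a semidirect product built from pendant-tree stabilizers and a small group of ``backbone symmetries''. Concretely, I would decompose $\Delta$ as a backbone $B$ -- the two cycles together with, in Type 2, the unique shortest path between them -- plus a family $\{T_v\}_{v\in V(B)}$ of rooted pendant trees, where $T_v$ is the connected component of $\Delta - E(B)$ rooted at $v$. Every $\varphi \in Aut(\Delta)$ sends $B$ to itself (since the cycles of $\Delta$ are intrinsic) and, for each backbone vertex $v$, restricts to a rooted-tree isomorphism $T_v \to T_{\varphi(v)}$. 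Hence $Aut(\Delta) \cong \left( \prod_{v \in V(B)} (Aut(T_v))_v \right) \rtimes P$, where $P \leq Aut(B)$ is the group of ``admissible'' backbone permutations, i.e., those that preserve the rooted isomorphism classes of the $T_v$. By the opening lemma of Section 3, each factor $(Aut(T_v))_v$ belongs to $\mathcal{T}$, and since $\mathcal{T}$ is closed under direct products the whole product lies in $\mathcal{T}$, so the remaining question is the shape of $P$ in each type.

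For Type 1 the backbone $B$ is a theta graph: two vertices $u, v$ joined by three internally disjoint paths $P_1, P_2, P_3$. A case analysis shows that $P$ is always a subgroup of $S_3 \times \mathbb{Z}_2$, with $S_3$ permuting paths of matching ``labelled type'' and $\mathbb{Z}_2$ swapping the endpoints. In the maximally symmetric subcase -- two of the three paths are pairwise equivalent as rooted labelled paths and the $u \leftrightarrow v$ swap is admissible -- Corollary \ref{cor2.3} applies and places $Aut(\Delta)$ in $\mathcal{B}_2$; specialising to trivial middle-vertex stabilizers collapses the answer to $\mathcal{B}_1$. In every remaining subcase $P$ lies in $\{1, \mathbb{Z}_2, S_3\}$ and the resulting $Aut(\Delta)$ decomposes as a direct product of wreath products of the form $G \wr \mathbb{Z}_2$ or $G \wr S_3$ with members of $\mathcal{T}$; such groups lie in $\mathcal{T}$ by closure under $-\wr S_n$.

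For Types 2 and 3 the only possible nontrivial admissible symmetry is the involution swapping the two cycles when they are isomorphic as labelled rooted cycles. Corollary \ref{cor2.4} and Lemma \ref{l2.5} handle the symmetric subcase and yield $Aut(\Delta) \cong H \times K \times (G_1 \times \cdots \times G_n) \wr \mathbb{Z}_2$ with $H, K, G_i \in \mathcal{T}$; in the asymmetric subcase the answer degenerates to the plain direct product $H \times K \times G_1 \times \cdots \times G_n$. Either way, closure of $\mathcal{T}$ under direct products and under the operation $-\wr S_2$ places $Aut(\Delta)$ in $\mathcal{T} \subseteq \mathcal{S}$.

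The main obstacle will be the Type 1 case-split. One must verify rigorously that the admissible group $P$ of theta-graph backbone symmetries cannot be anything other than a subgroup of $S_3 \times \mathbb{Z}_2$: the delicate point is that compatibility with the pendant labelling forces any permutation of $\{P_1, P_2, P_3\}$ to be realised by specific path-wise rooted isomorphisms, possibly post-composed with the endpoint swap. Once this compatibility is established, every possible $P$ is handled either by Corollary \ref{cor2.3} (producing $\mathcal{B}_1$ or $\mathcal{B}_2$) or by the closure properties of $\mathcal{T}$, completing the proof.
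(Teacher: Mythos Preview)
Your overall architecture (backbone $B$ plus rooted pendant trees $T_v$, giving $Aut(\Delta)\cong\bigl(\prod_v (Aut T_v)_v\bigr)\rtimes P$ with $P$ the admissible backbone symmetries) is exactly the scaffolding behind the paper's proof. But your determination of $P$ in Types~2 and~3 is wrong, and the error is not cosmetic.

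You assert that for Types~2 and~3 ``the only possible nontrivial admissible symmetry is the involution swapping the two cycles''. This omits the \emph{reflection of each cycle about its attachment vertex}. For Type~3 (two cycles sharing a single vertex $O$), the bare backbone already has $Aut(B)\cong \mathbb{Z}_2\wr\mathbb{Z}_2$ when the cycles have equal length (two independent reflections through $O$, wreathed by the cycle-swap) and $\mathbb{Z}_2\times\mathbb{Z}_2$ when they do not; for Type~2 the same reflections are present at the two attachment vertices. The paper's cases (M1)--(M3) and (N1) all exhibit $Aut(\Delta)$ of the form $G\wr(\mathbb{Z}_2\wr\mathbb{Z}_2)$ or $(\cdots\wr\mathbb{Z}_2)\wr\mathbb{Z}_2$, with $P$ of order $8$, which your single involution cannot produce. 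Relatedly, Corollary~\ref{cor2.4} and Lemma~\ref{l2.5} that you invoke for Types~2 and~3 are in fact Type-1 statements (theta-graph backbones); the paper handles Types~2 and~3 by a separate case analysis (the (M$i$) and (N$j$) lists) precisely because those earlier lemmas do not apply there. Your Type-1 analysis has a parallel omission: when all three paths of the theta graph are equivalent \emph{and} the $u\leftrightarrow v$ swap is admissible, one gets $P\cong S_3\times\mathbb{Z}_2$, which is in neither your ``maximally symmetric'' case nor your residual list $\{1,\mathbb{Z}_2,S_3\}$.

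The repair is not deep, but it is not automatic either: you must enumerate the possible $P\le Aut(B)$ in each type (subgroups of $S_3\times\mathbb{Z}_2$ for the theta graph, subgroups of $\mathbb{Z}_2\wr\mathbb{Z}_2$ for the other two types), and for each one check that the resulting semidirect product lands in $\mathcal{T}$, $\mathcal{B}_1$, or $\mathcal{B}_2$. The iterated-wreath cases such as $(G\wr\mathbb{Z}_2)\wr\mathbb{Z}_2$ do lie in $\mathcal{T}$ by Jordan's closure rules, but this must be argued, and the $\mathbb{Z}_2\times\mathbb{Z}_2$ cases that are \emph{not} iterated wreaths are exactly what forces the classes $\mathcal{B}_1,\mathcal{B}_2$ to appear.
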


\begin{proof}
There are three different types of bicyclic graphs as follows:
\begin{enumerate}
\item[(I)] \textit{There are two cycles in the graph with some common edges.} The result follows from Lemmas \ref{lem2}, \ref{l2.3}, \ref{l2.5} and Corollaries \ref{cor2.3}, \ref{cor2.4}.

\item[(II)] \textit{There are two cycles with a common vertex.} Suppose these two cycles have lengths $m$ and $n$, respectively. There are two different cases that the length of two cycles are equal or they have different lengths. We first assume that two cycles have the same length, i.e. $m=n$. From Figure \ref{11}, one can see that $m = \gamma + \gamma^\prime + 2$ and $n = \delta + \delta^\prime + 2$. There are five separate cases for the case that $m=n$  as follows:
\begin{figure}
\centering
\includegraphics[width=12cm]{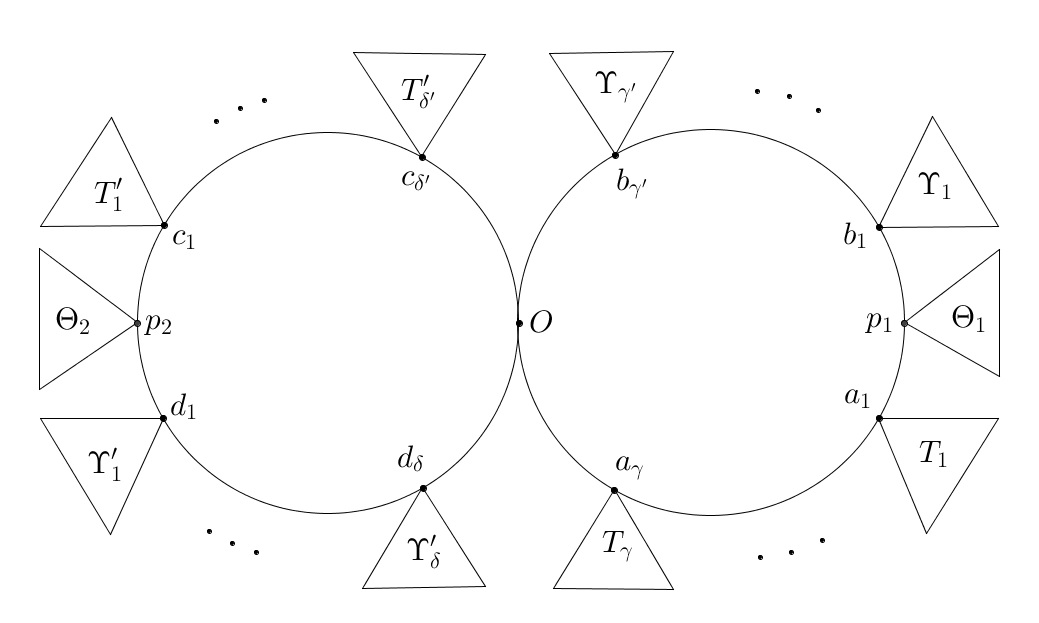}
\caption{The general case of a bicyclic graph when the cycles have a common vertex.}
\label{11}
\end{figure}
\begin{enumerate}
\item[(M1)] \textit{We have two cycles without trees attached to the vertices}. Suppose $\Delta$ is a bicyclic graph constructed from two cycles with a common vertex such that all vertices other than the common vertex have degree $2$, see Figure \ref{11} for details. Therefore, $Aut(\Delta)$ $=$ $\Bbb{Z}_2 \wr \Bbb{Z}_2 \in Aut(TREE)$ $\subseteq$ $\mathcal{S}$.

\item[(M2)] \textit{$(Aut(T_i))_{a_i}$ $\cong$ $(Aut(T^{\prime}_{i^\prime}))_{c_{i^\prime}}$ $\cong$ $(Aut(\Upsilon_j))_{b_j}$ $\cong$ $(Aut(\Upsilon^{\prime}_{j^\prime}))_{d_{j^\prime}}$ $\cong$ $(Aut(\Theta_1)_{p_1}$ $\cong$ $(Aut(\Theta_2)_{p_2}$ $\cong$ $G$, where $1 \leq i \leq \gamma$, $1 \leq i^\prime \leq \gamma^\prime$, $1 \leq j \leq \delta^\prime$ and $1 \leq j^\prime \leq \delta$}. We consider the bicyclic graph $\Delta$ in such a way that there are isomorphic rooted trees  $(T_1, a_1)$, $(T_{\gamma},a_{\gamma})$,  $(\Upsilon_1, b_1)$, $(\Upsilon_{\gamma^\prime}, b_{\gamma^\prime})$, $(T^{\prime}_1, c_1)$,  $(T^{\prime}_{\delta^\prime}, c_{\delta^\prime})$,  $(\Upsilon^{\prime}_1, d_1)$, $(\Upsilon^{\prime}_{\delta}, d_{\delta})$, $(\Theta_1, p_1)$, $(\Theta_2, p_2)$ attached to non-common vertices of two cycles which satisfy the condition $(\star)$, see Figure \ref{11}.  Then $Aut(\Delta)$ $\cong$ $G \wr (\Bbb{Z}_2 \wr \Bbb{Z}_2)$ $\in$ $\mathcal{T}$ $\subseteq$ $\mathcal{S}$.

\item[(M3)] Consider the bicyclic graph $\Delta$ with this property that $\gamma=\gamma^\prime=\delta=\delta^\prime$, $Aut(T^\prime_i)_{c_i}\cong Aut(\Upsilon_i)_{b_i} \cong Aut(\Upsilon_i^\prime)_{d_i} \cong Aut(T_i)_{a_i}$ and $Aut(\Theta_1)_{p_1} \cong Aut(\Theta_2)_{p_2}$. Define $G_i=(Aut(T_i))_{a_i}$ and $H_i=(Aut(\Theta_i))_{p_i}$. Then,
$Aut(\Delta)=(H_1 \times (G_1 \times \cdots G_{\gamma})\wr \Bbb{Z}_2) \wr\Bbb{Z}_2 \in \mathcal{T} \in \mathcal{S}$.

\item[(M4)] Consider the graph $\Delta$ in such a way that $\delta^\prime= \gamma^\prime$ and $\delta=\gamma$. Moreover, we assume that $\Upsilon_i \cong T^\prime_i$, $1\leq i  \leq \delta^\prime$, and they satisfy the condition $(\ast)$, $T_j \cong \Upsilon^\prime_j$, $1\leq j \leq \delta$, and again these graphs satisfy the condition $(\ast)$. By Figure \ref{11}, $  \Upsilon_i \cong T^{\prime}_i$, $T_i\cong \Upsilon^{\prime}_i$ and $\Theta_1\cong \Theta_2$.  Set $K_i=(Aut(\Upsilon_i))_{b_i}$. Then,
$Aut(\Delta)$ $\cong$ $(H_1\times G_1\times \cdots \times G_{\gamma}$ $\times$ $K_1 \times \cdots \times K_{\gamma^\prime})\wr \Bbb{Z}_2 \in  \mathcal{S}.$

\item[(M5)] In this case,  the general case of $M1-M4$ is considered into account in which we don't have isomorphisms between trees. Set $K_i=(Aut(\Upsilon_i))_{b_i}$, $G_i=(Aut(T_i))_{a_i}$, $G^{\prime}_i=(Aut(T^{\prime}_i))_{c_i}$ and $K^{\prime}_i=(Aut(\Upsilon^{\prime}_i)_{d_i}$. Then,
 $Aut(\Delta)$ $\cong$ $H_1 \times H_2 \times K_1 \times \cdots K_{\gamma^\prime}$ $\times$ $K^{\prime}_1 \times \cdots \times K^{\prime}_{\gamma}$ $\times$ $G_1 \times \cdots \times G_{\gamma}$ $\times$ $G^{\prime}_1  \times  \cdots \times G^{\prime}_{\delta^\prime}.$
\end{enumerate}

 \vskip 3mm

\noindent If two cycles have different lengths then we will have three cases as follows:

\begin{enumerate}
\item[(M6)] If there is no   tree $T$ such that $T$ is attached to a vertex of $\Delta$, then
 $Aut(\Delta)=\Bbb{Z}_2 \times \Bbb{Z}_2   \in \mathcal{S}$.

\item[(M7)] Suppose the graph $\Delta$ has this property that $\gamma=\gamma^\prime$ and $\delta= \delta^\prime$. Furthermore, we assume that for each $i$, $1\leq i \leq \gamma$, $\Upsilon_i \cong T_i$ satisfy the condition $(\ast)$ and for each $j$, $1\leq j \leq \delta$, $\Upsilon^\prime_j \cong T^\prime_j$ satisfy again $(\ast)$. Therefore, $Aut(\Delta)$ $\cong$ $(G_1\times\cdots \times G_{\gamma} \times  G^{\prime}_1\cdots \times G^{\prime}_{\delta})$ $\wr$ $\Bbb{Z}_2 \times H_1 \times H_2 $  $\in$ $\mathcal{S}$.

\item[(M8)] Suppose that $\gamma=\gamma^\prime$ and for each $i$, $1\leq i \leq \gamma $, $\Upsilon_i$,
$T_i$ are isomorphic and satisfy the condition $(\ast)$. Then it can be proved that $Aut(\Delta)$ $\cong$ $(G_1\times\cdots \times G_{\gamma})\wr \Bbb{Z}_2 \times H_1 \times H_2 \times G^{\prime}_1\cdots \times G^{\prime}_{\delta} \times K^{\prime}_1\cdots \times K^{\prime}_{\delta}$  $\in$ $\mathcal{S}$.
\end{enumerate}

\begin{figure}
\centering
\includegraphics[width=14cm]{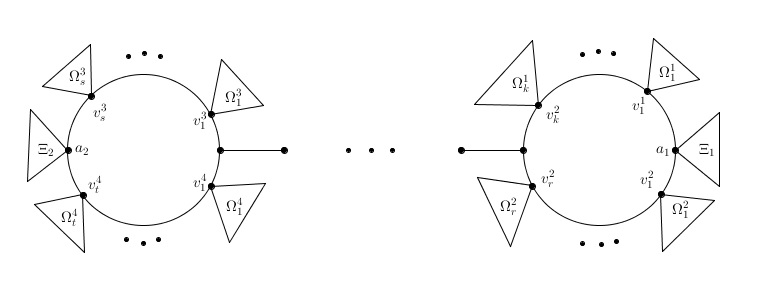}
\caption{A figure for the proof of Theorem 2.7 (Case III).}\label{12}
\end{figure}

\item[(III)] \textit{Two cycles of the graph is connected to each other by a path.} In this case, there are three cases for the bicyclic graph $\Delta$ and its general form is depicted in Figure \ref{12}. Suppose $F^j_i=Aut(\Omega^j_i)_{v^j_i}$ and $E_l=Aut(\Xi_l)_{a_l}$.
\begin{enumerate}
\item[(N1)] \textit{In the graph $\Delta$, $k = r = s = t$ and for each $i$, $\Omega_i^1$, $\Omega_i^2$, $\Omega_i^3$ and $\Omega_i^4$ are isomorphic and satisfy the condition $(\star)$, Figure \ref{12}. Moreover, $\Xi_1$ and $\Xi_2$ are isomorphic and satisfy again the condition $(\star)$}. By Figure \ref{12},  one can see that $Aut(\Delta)$ $\cong$ $(E_1 \times (F^1_1 \times \cdots \times F^1_k)\wr \Bbb{Z}_2 ) \wr  \Bbb{Z}_2 \in \mathcal{S}$, as desired.
\item[(N2)] \textit{In the graph $\Delta$, $k=r$ and $s=t$ and for each $i$, both $\Omega_i^1$, $\Omega_i^2$ and $\Omega_i^3$, $\Omega_i^4$ are mutually isomorphic and satisfy condition $(\star)$.} In this case, by Figure \ref{12} one can be easily seen that $Aut(B)$ $\cong$ $E_1 \times E_2 \times (F^1_1\times \cdots F^1_k)\wr \Bbb{Z}_2  \times  (F^3_1\times \cdots F^3_s)\wr \Bbb{Z}_2 \in \mathcal{S},$ that is our claim.
\item[(N3)] \textit{In the graph $\Delta$, $k = r = s = t$ and for each $i$, all pairs $\Omega_i^1$, $\Omega_i^3$; $\Omega_i^2$, $\Omega_i^4$ and $\Xi_1$, $\Xi_2$ are mutually isomorphic and all of them satisfy the condition $(\star)$.} Again we use the Figure \ref{12} to prove that $Aut(\Delta)$ $\cong$ $(E_1 \times F^1_1 \times \cdots \times F^1_k \times F^2_1 \times \cdots \times  F^2_r) \wr \Bbb{Z}_2 \in \mathcal{S}.$
\end{enumerate}
\end{enumerate}
Hence the result.
\end{proof}

\vskip 3mm

\noindent{\bf Acknowledgement.} The research of the authors are partially supported by the University of Kashan
under grant no 364988/111.

\vskip 3mm

\end{document}